\DeclareMathOperator{\Ima}{im}
\theoremstyle {remark} }
\theoremstyle {remark} }
\theoremstyle {definition} }
\theoremstyle {definition} \newtheorem {defa} {Definition} [section]}
\theoremstyle {plain}  \newtheorem {thm} [defa] {Theorem}}
\theoremstyle {plain}  \newtheorem {cor} [defa]{Corollary}}
\theoremstyle {plain} \newtheorem {prop} [defa]{Proposition}}
\theoremstyle {plain} \newtheorem {lem}[defa] {Lemma}}
\theoremstyle {remark} \newtheorem {rem}[defa] {Remark}}
\theoremstyle {remark} \newtheorem {ex}[defa] {Example}}
\begin{document}

\title{On the prequantisation map for 2-plectic manifolds}
\author{Gabriel Sevestre\thanks{gabriel.sevestre@univ-lorraine.fr}~  
 and Tilmann Wurzbacher\thanks{tilmann.wurzbacher@univ-lorraine.fr}\\
\phantom{Higgs} \\
Institut \'Elie Cartan Lorraine \\
Universit\'{e} de Lorraine et C.N.R.S. \\
F-57000 Metz, France \\}

\date{\today}

\maketitle

\centerline{\it{In memoriam Kirill Mackenzie}}

\begin{abstract}
\noindent For a manifold $M$ with an integral closed 3-form $\omega$, we construct a $PU(H)$-bundle and a Lie groupoid over its total space,
together with a curving in the sense of gerbes. If the form is non-degenerate, we furthermore give a natural Lie 2-algebra quasi-isomorphism 
from the observables of $(M,\omega)$ to the weak symmetries of the above geometric structure, generalising the prequantisation
map of Kostant and Souriau.
\end{abstract}

\noindent {\bf MSC (2020)} Primary: 58H05, 53C08, 53D50; Secondary: 53D05\\

\noindent {\bf Keywords:} prequantisation, multisymplectic geometry, geometrisation of \newline integral three-forms, multiplicative vector fields

\tableofcontents

\section*{Introduction}
\addcontentsline{toc}{section}{Introduction}

Geometric quantisation of symplectic manifolds gives an excellent guideline and many rigorous results on how to go from 
classical to quantum mechanics. 
Its procedure for a manifold equipped with an integral symplectic form $\omega$ can be divised in four steps. First a geometric set-up is 
established from the data: a principal $S^1$-bundle $q: P\to M$ with connection 1-form $A$ whose curvature $F^A=dA$ equals 
$(-i2\pi) \omega$.
Then a Lie algebra isomorphism, the ``prequantisation map'' from $C^{\infty}(M)$, the observables of $(M,\omega)$, to the vector fields on $P$ that preserve $A$, i.e., the symmetries of $(P,A)$, is constructed:

\begin{equation*}
(K) \hspace{2cm}  f\mapsto \widetilde{X_f}+(q^*f)\cdot\mathfrak{z} \, ,  \hspace{3cm}
\end{equation*}

where, for $f\in C^{\infty}(M)$, $X_f$ denotes the Hamiltonian vector field fulfilling $df=\iota_{X_f}\omega$, $\widetilde{X_f}$ its 
horizontal lift to $P$ and $\mathfrak{z}$ the fundamental vector field of the $S^1$-action on $P$, defined by 
$\mathfrak{z}_p=\frac{d}{d\tau}\big\vert_{\tau=0} \, p\cdot \exp(i2\pi \, \tau)$. (See the fundamental articles of Kostant
and Souriau:  \cite{MR0294568} resp. \cite{MR207332}.) \\

The last step of geometric prequantisation is the linear representation of the above symmetries on the vector space of $S^1$-equivariant 
$\mathbb{C}$-valued functions on $P$, whereas the fourth step concerns the passage to quantisation and involves more deliberate and 
delicate choices, such as polarisations and metaplectic structures (compare, e.g., \cite{MR1183739}). Though 
completely general uniqueness statements for the outcome of geometric quantisation are lacking (due essentially to the above 
mentioned choices in the fourth step), crucial classes of symplectic manifolds, as cotangent bundles, K\"ahler manifolds and coadjoint 
orbits of several types of Lie groups are ``quantised'' without ambiguities. Moreover, geometric quantisation is widely accepted in 
mathematical and theoretical physics as a very useful source of intuition when it comes to quantisation of point particles.\\

In contrast, modelling and, especially, quantising classical field theories beyond point particles, i.e., with sources of dimension bigger 
than one, is a highly active research area. Progress here has most certainly a strong impact on mathematics and physics. 
The last years have seen a new wave of interest in formulating classical field theories in terms of multisymplectic geometry, i.e., via
manifolds provided with non-generate closed $(n{+}1)$-forms for $n\geq 1$ replacing the symplectic (a.k.a. 1-plectic) manifolds used 
in mechanics. (Such couples $(M,\omega)$ are also called ``$n$-plectic manifolds''; compare \cite{MR3937870} for a recent discussion
of their properties.) This approach to classical field theory was strongly advocated and developped by the Polish school around 
Kijowski and Tulczyjew in the seventies of the last century (see, e.g., \cite{MR334772}), and later by the Spanish school (see, e.g., 
\cite{MR2559661}), but can be traced back at least to work of Volterra in the 19th century. One main 
reason for the revival of multisymplectic geometry is certainly the increased acceptance of higher geometric and algebraic structures,
as the Lie $\infty$-algebra of observables on a multisymplectic manifold, formulated in its modern form by Stasheff and co-workers,
and Baez and Rogers (see \cite{MR2982023} for a very general construction of observable algebras).\\
 
 In this article, we develop the geometric part of prequantisation for 2-plectic manifolds.\\
 
Our first important result is the existence of a global geometrisation of the datum of an integral pre-2-plectic form $\omega$
on a manifold $M$. More precisely we prove:\\

{\bf Theorem (Global two-plectic geometric prequantisation).}
\textit{Let $\omega$ be an integral, closed three-form on a finite-dimensional manifold $M$. Then there exists a smooth
principal $PU(H)$-bundle $\Pi: Y\rightarrow M$,  such that the associated lifting bundle gerbe $(P,Y)$ has a connective structure 
$(A,\theta)$ whose 3-curvature equals $(-i2\pi)\omega$, and a fortiori the bundle $\Pi$ has Dixmier-Douady class $[\omega]$.}\\

Since the observables of a 2-plectic manifolds $(M,\omega)$ do not form a Lie algebra but a Lie 2-algebra  $L_{\bullet}(M,\omega)$,
given by the two-term complex $C^{\infty}(M) \stackrel{d}{\longrightarrow} \Omega^1_{Ham}(M,\omega)$ together with brackets $l_2$ and $l_3$,
the natural analogon of (K) is a Lie 2-algebra morphism whose target are ``vector fields on the total space $P$ of the gerbe (or
equivalently on the associated Lie groupoid $P\rightrightarrows Y$)''. We construct a natural Lie 2-algebra $\hbox{WSym}(P,Y)$ 
of weak symmetries out of the multiplicative vector fields on $P$ that preserve the chosen connective structure up to a one-form on $Y$, and 
$\mathfrak{X}_V(Y)\oplus C^{\infty}(Y)\cong \Gamma(Y, \hbox{Lie}(P))$, the sections of the Lie algebroid of $P$. Then we show that there is 
a natural, explicit Lie 2-algebra morphism from the observables to the symmetries, in close analogy to (K):\\

{\bf Theorem (Two-plectic prequantisation map).}
\textit{Let $(M,\omega)$ be a 2-plectic manifold with $\omega$ an integral form. Then there is a natural Lie 2-algebra 
quasi-isomorphism $\Phi=(\Phi_1,\Phi_2): L_{\bullet}(M,\omega)\rightarrow \hbox{WSym}(P,Y)$:}

\begin{center}
\begin{tikzcd}
C^{\infty}(M)\arrow[r] \arrow[d,"\Phi_1" '] 
& \Omega^1_{Ham}(M,\omega) \arrow[dl,tail,"\Phi_2"] \arrow[d,"\Phi_1"] \\
\mathfrak{X}_V(Y)\oplus C^{\infty}(Y) \arrow[r]
& \mathscr{V}_{(P,Y)} \hspace{0.3cm}.
\end{tikzcd}
\end{center}

\textit{Here  $\Omega^1_{Ham}(M,\omega)$ denotes the space of one-forms $\alpha$ defining a Hamiltonian vector field $X_{\alpha}$ via 
$d\alpha = \iota_{X_{\alpha}}\omega$, $L_{\bullet}(M,\omega)$ the Lie 2-algebra of observables of $(M,\omega)$, 
$\mathfrak{X}_V(Y)$ the vertical vector fields on the total space of the bundle  
$\Pi: Y\rightarrow M$ of the preceding theorem, and $\mathscr{V}_{(P,Y)}$ the degree-$0$ term of $\hbox{WSym}(P,Y)$.}\\

Let us briefly describe the organisation of this article. In the first section, we collect some ``folklore'' about principal bundles, lifting 
bundle gerbes and central $S^1$-extensions of Lie groupoids, together with the appropriate type of connections and connective 
structures. In case we could not point
to a proof in the literature, we furnished it for completeness. The main contribution is here the above theorem on 
global two-plectic geometric prequantisation. Section 2 relies on the dictionary from lifting bundle gerbes $(P,Y)$ to
a certain type of Lie groupoids $P\rightrightarrows Y$, namely central $S^1$-extensions of $Y^{[2]}\rightrightarrows Y$, where 
 $Y^{[2]}=\{(y_1.y_2) \in Y^2 \, \vert \, \Pi(y_1)=\Pi(y_2)  \}$ and $\Pi: Y\to M$ is the $PU(H)$-bundle constructed in Section 1. We analyse the 
 multiplicative vector fields on $P\rightrightarrows Y$ in thorough detail, explaining notably their $S^1$-invariance and their relation to objects on 
 the base $Y$. Then we explain the Lie 2-algebra structure on $\hbox{WSym}(P,Y)$, the above mentioned  weak symmetries of  
 $P\rightrightarrows Y$, together with a fixed multiplicative connection $A$ and a curving $\theta$. Finally, we unravel here the crucial
relation for multiplicative vector fields between weak preservation of the connective structure on $P$ and the existence 
of a Hamiltonian generator on $M$. In the third section, we give 
 the analogon of (K), namely the Lie 2-algebra quasi-isomorphism 
of the theorem on the two-plectic prequantisation map. To achieve this goal, we first observe that there is a natural Lie 2-algebra morphism from 
$L_{\bullet}(M,\omega)$ to the Lie 2-algebra formed by the sections of $\hbox{Lie}(P)$, identified with vertical vector fields plus functions on $Y$, 
and the space of all multiplicative vector fields on $P$, and then refine it to the morphism $\Phi$ of the theorem.\\

For the convenience of the reader, Appendix A collects basic definitions and facts from the theory of Lie 2-algebras, whereas in Appendix B we expose our constructions and results in detail in the case of exact 2-plectic manifolds, i.e., for manifolds $M$ with a non-degenerate 3-form $\omega$ possessing
a potential $\chi \in \Omega^2(M)$ such that $d\chi = \omega$. (The exact case is important for the application of our constructions 
to many of the classical field theories considered in physics.)\\

The results and proofs of this article will appear in part in the PhD thesis of the first named author. They are independent and different
from the research of Krepski and Vaughan in \cite{2020arXiv200312874K}, and of Fiorenza, Rogers and Schreiber (see, e.g., 
\cite{MR3241135}). The main difference stems from our objective to always give a global differential-geometric picture 
(including a $PU(H)$-bundle geometrising the cohomology class of the 2-plectic form) and explicit formulae, notably for the
arising Lie 2-algebra morphisms. For the reader interested in comparing our work to these approaches to prequantisation of
2-plectic manifolds, we insert precise remarks at several places in the body of the article.\\

Another approach to 2-plectic prequantization is given by Rogers in \cite{Rogers13} (compare also \cite{MR2912195}), where the Lie 2-algebra of 
observables of a 2-plectic manifold $(M,\omega)$ is embedded into a Lie 2-algebra associated to the sections of a Courant algebroid, that
in turn is associated to $(M,\omega)$. This procedure parallels the interpretation of the symplectic prequantization map (K) above as
realising the symplectic observables, $C^{\infty}(M)$, inside the sections of the Atiyah algebroid associated to an integral symplectic
manifold. This procedure differs from ours since we stress the construction of a global geometric prequantization, 
and the realisation of the observables as ``vector fields on this global geometric object'',
refined to weak infinitesimal symmetries.  Nevertheless, combining Rogers' above ideas with unpublished work of 
Collier (\cite{col}) should allow for a construction ana\-logous to our two-plectic prequantization map, but in Brylinski's more algebraic 
(\v{C}ech) framework.\\

Finally, a natural idea is to use transgression of differential forms to pass from a finite-dimensional integral 
2-plectic manifold $(M,\omega)$ to the induced integral two-form $\hat{\omega}$ on $LM$, the loop space of $M$, 
and to try quantising the latter (infinite-dimensional) (pre-)symplectic manifold. Prominent examples are here the standard Cartan 3-form 
on a compact semi-simple Lie group, leading to a derivation of many properties of Wess-Zumino-Witten modeles 
(compare, e.g., \cite{Kohno}), and volume forms on compact three-manifolds, leading to interesting geometry on knot spaces 
(cf. \cite{Brylinski}). More recently, S\"amann, Szabo and collaborators combined the idea of transgression with
symplectic groupoids and higher algebraic structure on the observables of a 2-plectic manifold to analyse properties of 
a tentative quantisation of integral 2-plectic manifolds (see, e.g., \cite{Saemann-Szabo}). We dot not use transgression to 
loop spaces here since we are aiming to stay in the 2-plectic framework. \\ 

(Pre-)quantization of manifolds with integral three-forms is also surveyed in a recent preprint of Bunk \cite{Bunk21}.\\

\noindent {\it Acknowledgements.}  We thank Camille Laurent-Gengoux for useful discussions related to the 
content of this article, notably on multiplicative tensors on Lie groupoids. We also wish to thank Jouko Mickelsson for a 
helpful remark concerning principal connections in infinite dimensions. The research of Gabriel Sevestre was financially supported
by the R\'egion Grand Est in France.

%
%

\section{Global geometric prequantisation of integral pre-2-plectic manifolds}

In this section we construct a global geometric prequantisation of a pre-2-plectic manifold. Our approach naturally relies on
 principal $PU(H)$-bundles, bundle gerbes and central extensions of groupoids. As a preparation, we show several technical results needed to ensure 
 the existence of connections on certain principal bundles in infinite dimensions. With the exception of Theorem \ref{GGPQ} all results are  
 explicitely known or fokloristic. We give proofs only in case we could not find a reference covering the exact statement.

\begin{defa}
Let $M$ be a manifold and $\omega\in \Omega^3(M)$. We say that $(M,\omega)$ is a {\it pre-2-plectic manifold} if $\omega$ is closed. 
\end{defa}

It is well-known that $H^3(M,\mathbb{Z})$ for a finite-dimensional manifold $M$, or for a suitable infinite-dimensional manifold, parametrizes the equivalence classes of smooth principal $PU(H)$-bundles over $M$ (here $H$ is a separable complex 
Hilbert space and $PU(H)$ the Banach Lie group of projective unitary isomorphisms of $H$). Thus, given an integral, closed 3-form $\omega$ on $M$, we have via the canonical surjection $H^3(M,\mathbb{Z})\rightarrow H^3_{dR}(M,\mathbb{Z})$ a class $c\in H^3(M,\mathbb{Z})$ and an associated principal $PU(H)$-bundle $Y\rightarrow M$ with characteristic class equal to $c$. The class $c$ is called the {\it Dixmier-Douady class} of $Y\rightarrow M$ (cf., e.g., \cite{MR2377868}), and projects to $[\omega]$. Non-unicity arises from torsion in $H^3(M,\mathbb{Z})$, exactly as in the case of geometric prequantisation of symplectic manifolds. Our first technical goal is to assure us of the existence of principal connections on these bundles. 
We treat a slightly more general situation than strictly needed here.

\begin{defa}
A possibly infinite-dimensional manifold $M$ is called \textit{smoothly paracompact} if for every open covering $\mathcal{U}=\{U_\alpha\}_{\alpha\in\Lambda}$ of $M$ there exists a smooth partition of unity $\{\chi_\alpha\}_{\alpha\in\Lambda}$ subordinate to $\mathcal{U}$.
\end{defa}

\begin{rem}
The preceding notion is studied in detail in Section III.16 of \cite{MR1471480}.
\end{rem}

\begin{prop}
Let $G$ be a Banach Lie group with Lie algebra $\mathfrak{g}$, $M$ a 
smoothly paracompact manifold and $\pi:Q\rightarrow M$ a principal $G$-bundle. Then there 
exists a smooth principal connection on $Q$.
\end{prop}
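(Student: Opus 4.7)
The plan is to carry out the classical partition-of-unity construction, but with care that each ingredient makes sense in the Banach setting. First, I would use local triviality to pick a cover $\{U_\alpha\}_{\alpha \in \Lambda}$ of $M$ by open sets over which $\pi: Q \to M$ is trivial, together with trivialisations $\varphi_\alpha: \pi^{-1}(U_\alpha) \xrightarrow{\cong} U_\alpha \times G$, say $\varphi_\alpha(q) = (\pi(q), g_\alpha(q))$. On each such piece I would define a local connection one-form
\[
A_\alpha := g_\alpha^* \mu \in \Omega^1\bigl(\pi^{-1}(U_\alpha), \mathfrak{g}\bigr),
\]
where $\mu$ is the (left) Maurer--Cartan form on $G$. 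This is well-known to be a principal connection on $Q|_{U_\alpha}$: it is smooth because the Maurer--Cartan form of a Banach Lie group is smooth, it is $G$-equivariant in the sense $R_h^* A_\alpha = \mathrm{Ad}_{h^{-1}} \circ A_\alpha$, and it reproduces fundamental vector fields, $A_\alpha(\xi^*) = \xi$ for every $\xi \in \mathfrak{g}$.

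Next I would invoke the assumption that $M$ is smoothly paracompact to obtain a smooth partition of unity $\{\chi_\alpha\}_{\alpha \in \Lambda}$ on $M$ subordinate to $\{U_\alpha\}$, and set
\[
A \;:=\; \sum_{\alpha \in \Lambda} (\pi^*\chi_\alpha)\, \widetilde{A}_\alpha,
\]
where $\widetilde{A}_\alpha$ denotes the extension by zero of $A_\alpha$ to all of $Q$ after multiplication by $\pi^*\chi_\alpha$ (which has support inside $\pi^{-1}(U_\alpha)$). Since $\{\mathrm{supp}(\chi_\alpha)\}$ is locally finite on $M$, the family $\{\pi^{-1}(\mathrm{supp}(\chi_\alpha))\}$ is locally finite on $Q$, so the sum converges locally to a smooth $\mathfrak{g}$-valued one-form.

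Finally I would check that $A$ is a principal connection. The space of principal connections on $Q|_U$ for any open $U \subseteq M$ is an affine space modelled on $\Omega^1(U, \mathrm{ad}\, Q)$, and both defining conditions (right-equivariance and the value on fundamental vector fields) are preserved under convex combinations, because $\sum_\alpha \pi^*\chi_\alpha = 1$ and each $\pi^*\chi_\alpha$ is $G$-invariant. Concretely: $R_g^* A = \sum_\alpha (\pi^*\chi_\alpha) \mathrm{Ad}_{g^{-1}} A_\alpha = \mathrm{Ad}_{g^{-1}} A$, and $A(\xi^*) = \sum_\alpha (\pi^*\chi_\alpha) \xi = \xi$.

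The only genuine obstacle here is ensuring that the gluing step actually works in infinite dimensions, and this is precisely what the smooth paracompactness hypothesis on $M$ is introduced for; all other ingredients (existence of local sections, smoothness of the Maurer--Cartan form of a Banach Lie group, affine structure on the space of connections) are standard and survive the passage from finite-dimensional to Banach base manifolds without modification.
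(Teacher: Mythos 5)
Your proposal is correct and follows essentially the same route as the paper: pull back the canonical flat connection (i.e.\ the Maurer--Cartan form) through each local trivialisation and glue the resulting local connection forms with a smooth partition of unity pulled back along $\pi$. The paper leaves the equivariance, fundamental-vector-field and local-finiteness checks as a ``direct verification'', which you have simply spelled out.
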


\begin{proof}
Let $\{U_\alpha\}_{\alpha\in\Lambda}$ be an open covering of $M$ such that there are principal $G$-bundle isomorphisms $\psi_\alpha:Q|_{U_\alpha}=\pi^{-1}(U_\alpha)\rightarrow U_\alpha \times G$ over $\hbox{id}_{U_\alpha}$, and let $A_\alpha$ be the pullback via $\psi_\alpha$ of the connection 1-form of the canonical flat connection on $U_\alpha\times G$. A direct verification shows (as in the finite-dimensional case) that the $\mathfrak{g}$-valued 1-form $A=\sum_\alpha (\chi_\alpha \circ\pi)\cdot A_\alpha$ is a connection 1-form on the $G$-bundle $\pi$.
\end{proof}

\begin{cor}
Let $M$ be a finite-dimensional manifold and $\omega$ a closed, integral 3-form on $M$. Then there exists a smooth principal $PU(H)$-bundle (with $H$ a separable complex Hilbert space), such that its Dixmier-Douady class equals (up to torsion) the class of $\omega$ in $H^3(M,\mathbb{Z})$, and all such bundles possess smooth principal connections.
\end{cor}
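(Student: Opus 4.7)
The corollary is essentially a direct assembly of the preceding classification remark and Proposition before it, so my plan is simply to recall the relevant ingredients and check that their hypotheses are met in this setting.

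First, I would invoke the classification statement cited just before the proposition: equivalence classes of smooth principal $PU(H)$-bundles over a finite-dimensional manifold $M$ are in bijection with $H^3(M,\mathbb{Z})$ via the Dixmier-Douady class, and the natural map $H^3(M,\mathbb{Z}) \to H^3_{dR}(M,\mathbb{R})$ is surjective with kernel the torsion subgroup. Starting from the closed integral 3-form $\omega$ we obtain the de Rham class $[\omega] \in H^3_{dR}(M,\mathbb{R})$, which by integrality lifts to an integral class in $H^3(M,\mathbb{Z})$; any such lift differs only by torsion, which explains the qualification ``up to torsion''. Picking such a lift $c$, the classification produces a smooth principal $PU(H)$-bundle $\Pi: Y \to M$ whose Dixmier-Douady class equals $c$.

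Next, I would check that $M$ satisfies the hypotheses of the preceding proposition, i.e.\ that a finite-dimensional manifold is smoothly paracompact. This is classical: a finite-dimensional (Hausdorff, second-countable, or more generally paracompact) smooth manifold admits smooth partitions of unity subordinate to any open cover, a standard fact which is also explicitly covered in Section III.16 of \cite{MR1471480} cited in the remark above. Since $PU(H)$ is by assumption a Banach Lie group (it is the Banach Lie group of projective unitary isomorphisms of the separable complex Hilbert space $H$), the proposition applies verbatim to any principal $PU(H)$-bundle over $M$ and provides a smooth principal connection on it.

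Combining these two observations finishes the corollary: the first yields the existence of a bundle with the prescribed Dixmier-Douady class (modulo torsion), and the second, applied to any such bundle, furnishes a smooth principal connection. There is essentially no obstacle here beyond correctly stating the classification and identifying the torsion ambiguity; the real work has already been done in the proposition, whose proof via a smooth partition of unity glueing of local flat connections carries over unchanged from the finite-dimensional case to the Banach setting thanks to smooth paracompactness of $M$.
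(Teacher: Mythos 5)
Your argument is correct and is exactly the assembly the paper intends: the corollary is stated without proof precisely because it follows from the classification of $PU(H)$-bundles by $H^3(M,\mathbb{Z})$ (with torsion accounting for non-uniqueness) together with Proposition 1.4 applied to the smoothly paracompact finite-dimensional base. The only nitpick is that the canonical surjection is onto the integral de Rham classes $H^3_{dR}(M,\mathbb{Z})$ rather than all of $H^3_{dR}(M,\mathbb{R})$, but your use of it (lifting $[\omega]$ to an integral class, unique up to torsion) is exactly right.
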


Before going to the gerbe intepretation of 3-forms, we need some more information concerning the existence of connections. We start with the following:

\begin{lem}
Let $H$ be a separable complex Hilbert space, $U(H)$ the unitary group of $H$ with the norm topology and $PU(H)=U(H)/S^1$ the projective unitary group of $H$ with the ensueing quotient topology. Then:

\begin{enumerate}[label=(\roman*)]
\item $S^1\rightarrow U(H)\rightarrow PU(H)$ is a short exact sequence of smooth Banach Lie groups,
\item the preceding sequence induces a smooth principal $S^1$-bundle $U(H)\rightarrow PU(H)$,
\item the principal $S^1$-bundle $U(H)\rightarrow PU(H)$ possesses a smooth principal connection.
\end{enumerate}
\end{lem}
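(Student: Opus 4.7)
The plan is to establish (i) first, since it contains all the substantive content, and then derive (ii) and (iii) from it. For (i), I would rely on the classical fact that $U(H)$ with the norm topology is a real Banach Lie group whose Lie algebra $\mathfrak{u}(H)$ consists of the bounded skew-adjoint operators on $H$. The scalar subgroup $S^1 = \{e^{it} \cdot \mathrm{Id} : t \in \mathbb{R}\}$ is a closed central (hence normal) Lie subgroup with Lie algebra $i\mathbb{R} \cdot \mathrm{Id}$. To promote the short exact sequence to one of Banach Lie groups, I would invoke the standard criterion that $G/K$ inherits a Banach Lie group structure from $G$ when $K$ is a closed normal Lie subgroup whose Lie algebra is topologically complemented in that of $G$. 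To exhibit such a complement, fix a unit vector $v_0 \in H$ and consider the continuous $\mathbb{R}$-linear form $\ell: \mathfrak{u}(H) \to i\mathbb{R}$, $\ell(X) = \langle X v_0, v_0\rangle$, which is purely imaginary because $X^{*} = -X$. Then $X - \ell(X)\cdot\mathrm{Id}$ lies in $\ker\ell$, and $\mathfrak{u}(H) = (i\mathbb{R}\cdot\mathrm{Id}) \oplus \ker\ell$ is a topological direct sum.

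Statement (ii) is then immediate: a closed central Lie subgroup with complemented Lie algebra yields a smooth principal bundle via the quotient map. For (iii), rather than appealing to a partition-of-unity argument (which would require smooth paracompactness of $PU(H)$, a nontrivial issue in the Banach setting), I would write down a connection globally using the Maurer--Cartan form. Let $\Theta \in \Omega^1(U(H), \mathfrak{u}(H))$ be the left-invariant Maurer--Cartan form $\Theta_g(X_g) = g^{-1} X_g$, where the product uses the Banach algebra structure of $B(H)$. Setting $\mathrm{pr}(X) := \ell(X) \cdot \mathrm{Id}$ and $A := \mathrm{pr} \circ \Theta$, I obtain a smooth $i\mathbb{R}$-valued 1-form on $U(H)$ and verify its connection properties. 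The fundamental vector field of the right $S^1$-action at $g$ is $gi$, so $\Theta_g(gi) = i \cdot \mathrm{Id}$, whose image under $\mathrm{pr}$ is itself; thus $A$ reproduces the fundamental vector fields. Equivariance is automatic: for $\lambda \in S^1$, $\Theta_{g\lambda}(X_g \lambda) = \lambda^{-1} g^{-1} X_g \lambda = g^{-1} X_g$ since $\lambda$ is central, and $S^1$ acts trivially on its own Lie algebra, so invariance of $A$ coincides with the required equivariance.

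The hard part is concentrated in the Lie-theoretic input for (i): establishing that $U(H)$ is genuinely a Banach Lie group in the norm topology (a classical but nontrivial fact, for which one cites Kuiper/de la Harpe or textbook treatments), and exhibiting the topological complement of $i\mathbb{R}\cdot\mathrm{Id}$. Once this is in hand, (ii) and (iii) are essentially mechanical. The Maurer--Cartan approach to (iii) is preferable to the paracompactness route of the preceding proposition, both because it is more economical and because it delivers an explicit formula that will likely be useful later in the paper; it also sidesteps the delicate question of whether $PU(H)$ admits smooth partitions of unity in the Banach sense.
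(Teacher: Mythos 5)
Your argument is correct and is essentially the paper's own: the paper likewise takes (i) as classical, builds the closed complement of $i\mathbb{R}\cdot\mathrm{id}_H$ inside $\mathfrak{u}(H)$ from the bounded functional $\xi\mapsto -i\langle v,\xi(v)\rangle$ attached to a unit vector, uses the local diffeomorphism property of the exponential to get local triviality of the quotient, and defines the connection as the Maurer--Cartan form followed by the projection onto $i\mathbb{R}\cdot\mathrm{id}_H$, with centrality of $S^1$ guaranteeing $\mathrm{Ad}$-invariance of the splitting. Your observation that this explicit construction sidesteps the smooth-paracompactness hypothesis of the general existence result is exactly why the paper treats this bundle separately.
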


\begin{proof}
(i) is well-known.\\

Let us recall that the Lie algebra of $U(H)$ is given as $\mathfrak{u}(H)=T_{\hbox{id}_H}U(H)=\{\xi\in \mathcal{B}(H) \ | \ \xi^*=-\xi\}$, where $\mathcal{B}(H)$ is the space of bounded linear endomorphisms of $H$. We denote the Lie algebra of $S^1=
\{\exp(it) \cdot \hbox{id}_H \, \vert \, t\in \mathbb{R} \}$ by $\mathfrak{z}=\{it\cdot \hbox{id}_H \ | \ t\in\mathbb{R}\}$ and of 
$PU(H)$ by $\mathfrak{pu}(H)=\mathfrak{u}(H)/\mathfrak{z}$. 

(ii) Observing that $\mathfrak{z}\subset \mathfrak{u}(H)$ is 1-dimensional, we construct a closed complement of this subspace
as follows. First let us fix the convention that the scalar product $<,>$ of $H$ is complex-linear in its second entry. 
Upon choosing an element $v$ of norm one in $H$, we define a continuous linear functional $\varphi$ on $\mathcal{B}(H)$ by 
$\varphi(\xi)=$ $-i <v,\xi(v)>$. It is clear that $\varphi$ takes real values on $\mathfrak{u}(H)$ and that 
$\varphi(i \cdot \hbox{id}_H)=1$.
The kernel of $\varphi$ will be denoted by $\mathfrak{m}$ and it is a closed linear complement of $\mathfrak{z}$, i.e.,
$$(*)  \hspace{1cm} \mathfrak{u}(H)=\mathfrak{z}\oplus\mathfrak{m} \hspace{1cm} \hbox{as real Banach spaces. }$$

Since the exponential map $\exp^{U(H)}:\mathfrak{u}(H)\rightarrow U(H)$ is given by the exponential series,
$\exp^{U(H)}$ is a local diffeomorphism and it follows as in the finite-dimensional case that $U(H)\rightarrow PU(H)$ is a smooth principal $S^1$-bundle (notably being locally trivial!).

(iii) Since $S^1$ is a central subgroup of $U(H)$, the splitting $(*)$  is invariant under the adjoint action of $U(H)$ restricted to $S^1$. Thus the canonical $\mathfrak{u}(H)$-valued 1-form on $U(H)$, followed by the projection onto $\mathfrak{z}$ defined by this splitting, yields a connection 1-form on $U(H)\rightarrow PU(H)$, that is furthermore invariant under the action of $U(H)$ on itself by left multiplication. (Compare, e.g., Theorem 11.1 in Chapter II of 
\cite{MR1393940} for a proof in a finite-dimensional situation, that immediately generalizes to our setup.)
\end{proof}

\begin{rem}
In fact, all maps and constructions in the preceding lemma are real-analytic.
\end{rem}

\begin{defa}
Let $\pi: Y\to M$ be a surjective submersion of smooth Banach manifolds. Then we set $Y^{[0]}:=M$, $Y^{[1]}:=Y$
and for a natural number $k\geq 2$, $Y^{[k]}:=Y\times_\pi Y \times_\pi ...  \times_\pi Y=\{(y_1,...,y_k)\in Y \ | \ \pi(y_1)=...=\pi(y_k)\}$. We call $Y^{[k]}$
the \textit{$k$-fold fiber product of $Y$ with itself (over $M$)}.
\end{defa}

\begin{rem}
All fiber products $Y^{[k]}$ for $k\geq 0$ are smooth since $\pi$ is assumed to be a surjective submersion.
\end{rem}

\begin{lem}
Let $G$ be a Banach Lie group and $Y\xrightarrow{\pi} M$ a principal $G$-bundle in the category of smooth Banach manifolds. Then there is a unique, smooth map $\psi:Y^{[2]}\rightarrow G$ such that for all $(y,y')\in Y^{[2]}$, $y'=y\cdot \psi(y,y')$. 
\end{lem}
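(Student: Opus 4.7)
The plan is to define $\psi$ pointwise using the principal bundle axioms, and then establish smoothness by working in local trivialisations.

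For pointwise existence and uniqueness, fix $(y,y')\in Y^{[2]}$. Since $\pi(y)=\pi(y')$, both elements lie in the same fiber of $\pi$, and the defining properties of a principal $G$-bundle guarantee that $G$ acts freely and transitively on this fiber. Transitivity produces at least one $g\in G$ with $y'=y\cdot g$, while freeness forces this $g$ to be unique. Setting $\psi(y,y'):=g$ then yields a well-defined set-theoretic map; the uniqueness statement in the lemma is automatic from the freeness of the action.

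It remains to verify smoothness. I would choose an open cover $\{U_\alpha\}_{\alpha\in\Lambda}$ of $M$ such that $Y|_{U_\alpha}$ admits a principal bundle trivialisation $\Psi_\alpha: \pi^{-1}(U_\alpha)\to U_\alpha\times G$ over $\hbox{id}_{U_\alpha}$, writing $\Psi_\alpha(y)=(\pi(y),\varphi_\alpha(y))$ where $\varphi_\alpha: \pi^{-1}(U_\alpha)\to G$ is smooth and $G$-equivariant, i.e., $\varphi_\alpha(y\cdot h)=\varphi_\alpha(y)\cdot h$ for all $h\in G$. The open sets $V_\alpha:=\pi^{-1}(U_\alpha)\times_\pi \pi^{-1}(U_\alpha)$ cover $Y^{[2]}$, and for $(y,y')\in V_\alpha$, the relation $y'=y\cdot \psi(y,y')$ together with equivariance of $\varphi_\alpha$ forces
\[
\psi(y,y')=\varphi_\alpha(y)^{-1}\cdot \varphi_\alpha(y').
\]
Since inversion and multiplication in the Banach Lie group $G$ are smooth, the right-hand side is a smooth function of $(y,y')\in V_\alpha$. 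By uniqueness, these local expressions agree on overlaps, so they glue to a globally defined smooth map $\psi:Y^{[2]}\to G$.

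There is no real obstacle here: the content is entirely in the (folkloristic) fact that a principal $G$-bundle in the Banach category has smooth local sections, which is built into the definition via the existence of local trivialisations, so the argument is the standard one used in the finite-dimensional setting.
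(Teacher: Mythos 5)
Your proof is correct, and the set-theoretic part (existence and uniqueness of $\psi(y,y')$ from the free and transitive action of $G$ on the $\pi$-fibers) is exactly what the paper does. Where you diverge is in the smoothness step: the paper disposes of it in one line by appealing to the implicit function theorem for maps between Banach manifolds (implicitly, the action map $Y\times G\to Y^{[2]}$, $(y,g)\mapsto(y,y\cdot g)$, is a bijective smooth map whose inverse has $\psi$ as its second component), whereas you compute $\psi$ explicitly in a local trivialisation as $\psi(y,y')=\varphi_\alpha(y)^{-1}\cdot\varphi_\alpha(y')$ and invoke smoothness of multiplication and inversion in the Banach Lie group $G$. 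Your route is more elementary and self-contained --- it uses only the local triviality built into the definition of a principal bundle (the same definition the paper itself relies on in the proof of its Proposition 1.4) and avoids any delicate invertibility hypothesis needed to run the implicit function theorem in infinite dimensions; the paper's argument is shorter but leaves the reader to check that the relevant differential is a Banach-space isomorphism. Both are complete proofs; yours arguably makes the ``folkloristic'' content more transparent.
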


\begin{proof}
The pair $(y,y')\in Y^2$ is in $Y^{[2]}$ if and only if $\pi(y)=\pi(y')$, that is if and only if $y$ and $y'$ are in the same orbit of the free and transitive $G$-action on $\pi$-fibers. Thus it exists a unique $g=\psi(y,y')\in G$ such that $y'=y\cdot g=y\cdot \psi(y,y')$, yielding $\psi: Y^{[2]}\rightarrow G$. The map $\psi$ is smooth by the implicit function theorem for differentiable maps between Banach manifolds.
\end{proof}

We now recall the definition of $S^1$-bundle gerbes that goes back to \cite{MR1405064}. 

\begin{defa}
Let $M$ be a smooth manifold. A \textit{$S^1$-bundle gerbe $(P,Y)$ over $M$} consists of:

\begin{itemize}
\item a smooth manifold $Y$ with a surjective submersion $Y\xrightarrow{\pi} M$,
\item a principal $S^1$-bundle $P$ over $Y^{[2]}$,
\item a smooth isomorphism $\mu:\pi_3^*P\otimes \pi_1^*P\rightarrow \pi_2^*P$ of principal $S^1$-bundles over $Y^{[3]}$ 
(where $\pi_i:Y^{[3]}\rightarrow Y^{[2]}$ denotes the projection that forgets the $i^{th}$ factor).
\end{itemize}

Furthermore, the isomorphism $\mu$ has to verify an associativity condition, which can be expressed by the commutativity of the following diagram of $S^1$-bundles over $Y^{[4]}$:

\begin{center}
\begin{tikzcd}
\pi_{34}^*P\otimes \pi_{14}^*P\otimes \pi_{12}^*P \arrow[r,"\mu\otimes id"] \arrow[d,"id\otimes\mu"]
& \pi_{24}^*P\otimes \pi_{12}^*P \arrow[d,"\mu"] \\
\pi_{34}^*P\otimes \pi_{13}^*P \arrow[r,"\mu"]
& \pi_{23}^*P   \, .
\end{tikzcd}
\end{center}

Here $\pi_{ij}:Y^{[4]}\rightarrow Y^{[2]}$ denotes the projection which forgets the $i^{th}$ and $j^{th}$ factors. 

\end{defa}

\begin{rem}
Since we are only dealing with $S^1$-bundle gerbes in this article, we allow ourselves to use the shorter term {\it bundle gerbes} for them
as well. 
\end{rem}

Suppose we are given a principal $G$-bundle $Y\rightarrow M$ with a Lie group  $G$ that possesses a central $S^1$-extension :

\begin{equation*}
1\rightarrow S^1 \rightarrow \widehat{G}\buildrel q\over\rightarrow G \rightarrow 1 \, ,
\end{equation*}

and consider the map $\psi:Y^{[2]}\rightarrow G$ (as in Lemma 1.10). Then we can pull back the principal $S^1$-bundle $\widehat{G}\rightarrow G$ via $\psi$ to obtain a principal $S^1$-bundle $P\rightarrow Y^{[2]}$. We may then define a map $\mu$ via the multiplication on $\widehat{G}$, namely:

\begin{equation*}
\mu((y,y',\hat{g}),(y',y'',\hat{g}'))=(y,y'',\hat{g}\cdot \hat{g}'),
\end{equation*}

for $y,y',y''\in Y^{[3]}$ and $\hat{g},\hat{g}'\in \widehat{G}$. This gives a bundle gerbe $(P,Y)$ over $M$, called the \textit{lifting bundle gerbe associated to} $Y\rightarrow M$.\\

There is an equivalent description of bundle gerbes, via extensions of groupoids. Since the below result will be crucial for us and it is only hinted at in the original source \cite{MR2817778}, we give the idea of its proof.

First we need the

\begin{defa}
Let $G_1\rightrightarrows G_0$ be a Lie groupoid. A Lie groupoid $Q_1\rightrightarrows G_0$ is called an 
$S^1${\it -central extension of}
 $G_1\rightrightarrows G_0$ if there exists an exact sequence of Lie groupoids

\begin{equation*}
1\rightarrow G_0\times S^1\buildrel \chi\over\rightarrow Q_1 \buildrel \rho\over\rightarrow G_1 \rightarrow 1
\end{equation*}

over $\hbox{id}_{G_0}$ such that the image of $\chi$ is central in $Q_1$.
\end{defa}

\begin{prop}
Given a surjective submersion $Y\xrightarrow{\pi}M$, the $S^1$-bundle gerbes $(P,Y)$ over $M$ (with the given $\pi$ part of the data) 
are in bijection with the central $S^1$-extensions of the Lie groupoid $Y^{[2]} \rightrightarrows Y$ 
(with $Y^{[2]}$ defined by the given $\pi$).

\end{prop}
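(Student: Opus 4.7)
The plan is to exhibit mutually inverse constructions between the two sets. In the forward direction, starting from a bundle gerbe $(P,Y,\mu)$, I set $Q_1 := P$ as a smooth Banach manifold and equip it with a groupoid structure over $Y$ by declaring the source and target of $p \in P_{(y_1,y_2)}$ to be $y_2$ and $y_1$ respectively (i.e.\ by composing $P \to Y^{[2]}$ with the two projections $Y^{[2]} \to Y$). Two arrows $p \in P_{(y_1,y_2)}$ and $p' \in P_{(y_2,y_3)}$ are composable in the pair groupoid, and over $(y_1,y_2,y_3) \in Y^{[3]}$ the isomorphism $\mu$ maps $P_{(y_1,y_2)} \otimes P_{(y_2,y_3)} \to P_{(y_1,y_3)}$, so I set $p \cdot p' := \mu(p \otimes p')$. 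The central $S^1$-subgroupoid $Y \times S^1$ is obtained from the fibrewise $S^1$-action on $P$, and centrality follows from the $S^1$-equivariance of $\mu$ in each tensor factor.

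Associativity of this multiplication is precisely the pentagon axiom in the definition of a bundle gerbe, applied over $Y^{[4]}$. The main obstacle will be the construction of units and inverses, as these are not directly part of the bundle gerbe data. For the units, I specialise $\mu$ to the diagonal point $(y,y,y)$, which endows the $S^1$-torsor $P_{(y,y)}$ with a smooth associative product; applying the pentagon over $(y,y,y,y)$ forces it to be an abelian group structure, necessarily isomorphic to $S^1$, and its neutral element provides the unit section $Y \to P|_{\Delta_Y}$. For inverses, I use the implicit function theorem inside the $S^1$-principal fibre $P_{(y_2,y_1)}$ to solve $\mu(p, \iota(p)) = 1_{y_1}$, giving a smooth map $\iota : P_{(y_1,y_2)} \to P_{(y_2,y_1)}$. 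Once these are in place, the exact sequence $1 \to Y \times S^1 \to Q_1 \to Y^{[2]} \to 1$ is clear, since $\rho := (P \to Y^{[2]})$ is surjective with fibre $S^1$, and $P_{(y,y)} \cong S^1$ realises the central kernel over each $y \in Y$.

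For the backward direction, given a central $S^1$-extension $1 \to Y \times S^1 \xrightarrow{\chi} Q_1 \xrightarrow{\rho} Y^{[2]} \to 1$, I define $P := Q_1$, view it as a principal $S^1$-bundle over $Y^{[2]}$ via $\rho$ with $S^1$-action coming from $\chi$ and left (equivalently right, by centrality) multiplication, and define $\mu$ fibrewise as the groupoid composition of $Q_1$. Associativity of groupoid multiplication in $Q_1$ immediately delivers the pentagon, so $(P,Y,\mu)$ is a bundle gerbe. Finally, I verify that the two constructions are mutually inverse: this is essentially tautological, since both directions transport the same underlying smooth manifold and the same fibrewise multiplication between two equivalent encodings (bundle gerbe datum versus $S^1$-central extension), with units and inverses on one side exactly matching the distinguished structure on the other. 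The only non-routine part of the whole argument is the extraction of units and inverses outlined above, which is the piece of groupoid structure not visible in the raw bundle gerbe axioms.
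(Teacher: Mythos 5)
Your proposal is correct and follows essentially the same route as the paper: both directions are the evident dictionary (source and target from $\rho=(t,s)$, multiplication $m$ from $\mu$ and vice versa, the $S^1$-action recovered from composition with the central kernel, associativity of $m$ matching the coherence condition on $\mu$). The only real difference is that where the paper simply cites \cite{MR1405064} for the canonical section of $\Delta^*P$ and the inverse map, you construct them by hand; your construction works, though it is heavier than necessary in two places. The unit is most cleanly obtained as the unique idempotent of the $S^1$-biequivariant associative product on the torsor $P_{(y,y)}$ (if $\mu(p_0,p_0)=p_0w$, then $e:=p_0w^{-1}$ is the unique idempotent and is automatically a two-sided unit), with no appeal to the coherence diagram over $(y,y,y,y)$ needed for its existence; and the inverse needs no implicit function theorem, since for any $q\in P_{(y_2,y_1)}$ one has $\mu(p,q)=e(y_1)z$ for a unique $z\in S^1$ and may set $\iota(p):=qz^{-1}$. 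One small point you should still record: the idempotent $e(y)$ must be checked to act as a unit on \emph{all} arrows with matching source or target, not just on $P_{(y,y)}$ — this follows from associativity together with $S^1$-equivariance by the same ``$u^2=u$'' trick — and this is what makes $\iota(p)$ a two-sided inverse.
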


\begin{proof}
Fix $\pi: Y \rightarrow M$ during this proof. \\

Given a bundle gerbe in the above sense, we have notably a principal $S^1$-bundle $P \rightarrow Y^{[2]}$. Denoting its projection by $\rho$, we 
have the second morphism of a sequence

\begin{equation*}
(*) \hskip1cm 1\rightarrow Y\times S^1\rightarrow P \rightarrow Y^{[2]} \rightarrow 1. 
\end{equation*}

Furthermore, if $\Delta: Y\rightarrow  Y^{[2]}$ is the diagonal embedding, we have a canonical section of the pullback bundle
$\Delta^*P\rightarrow Y$ and an ``inverse'' $P_{(y_1,y_2)} \rightarrow P_{(y_2,y_1)}, p\mapsto p^{-1}$ (cf. \cite{MR1405064}
for these facts). The map $\rho$ now yields a source $s=\pi_2 \circ \rho$ and a target $t=\pi_1 \circ \rho$, as well as a multiplication 
map $m: P {{}_s\times_t} P\rightarrow P$, defined in terms of $\mu$, and a unit map 
$\varepsilon: Y\rightarrow P, y\mapsto \varepsilon(y)\in P_{(y,y)}$. Obviously 
$\rho$ is then a surjective Lie groupoid morphism. The kernel of $\rho$ is easily seen to be the image of the injective morphism 
$\chi: Y\times S^1 \rightarrow P, \chi(y,z)=\varepsilon(y)\cdot z$.\\

On the other hand, given a central $S^1$-extension as described by $(*)$, we define a right-action of $S^1$ on $P$ by $p\cdot z := m(p,\chi(y,z))$
for $z\in S^1$ and $p$ in $P$ lying over $(y',y)$ in $Y^{[2]}$, and $m$ the multiplication of the groupoid $P\rightrightarrows Y$. Finally, we define
$\mu$ via the groupoid multiplication $m: P_{(y_1,y_2)}\times  P_{(y_2,y_3)} \rightarrow  P_{(y_1,y_3)}$. The associativity of the multiplication $m$ 
implies the associativity condition necessary for $\mu$ to be a ``bundle gerbe multiplication''. 
\end{proof}

We now briefly recall connective structures on bundle gerbes whose detailed study can be found in the literature (see, e.g., 
\cite{MR1405064}, \cite{stevphd}). A \textit{bundle gerbe connection} on a bundle gerbe $(P,Y)$ over $M$ is given by a connection one-form
$A$ on the total space of the principal $S^1$-bundle $P\rightarrow Y^{[2]}$, such that $A$ is \textit{multiplicative} in the following sense:

\begin{equation*}
m^*A=\hbox{proj}_1^*A+\hbox{proj}_2^*A   \, ,
\end{equation*}

where $m,\hbox{proj}_1, \hbox{proj}_2:P {{}_s\times_t} P\rightarrow P$ are respectively the partial multiplication and the projections onto the first and second factor
(the maps $s$ and $t$ denote the source and target maps of the groupoid). In fact, given any connection on the principal $S^1$-bundle $P\rightarrow Y^{[2]}$, it is always possible to modify it to obtain a multiplicative connection. For this and the sequel up to the theorem, compare Section 4.4 of \cite{MR2681698}.\\

The curvature of such a connection descends to a 2-form $\theta$ on $Y$, more precisely, there exists a $\theta\in i \Omega^2(Y)$ fulfilling:

\begin{equation*}
dA=(t^*-s^*)\theta.
\end{equation*}

We call such a 2-form $\theta$ a \textit{curving}. Finally, since $d(t^*-s^*)\theta=0$, there exists $\kappa\in i\Omega^3(M)$ such that $d\theta=\Pi^*\kappa$. 
(We denote from now the map $Y\to M$ by $\Pi$ to distinguish it from the number $\pi$.) We call $\kappa$ the \textit{3-curvature} and observe that $(-2\pi i) \kappa$ is integral.
Note that the curving $\theta$ is not unique, in fact, if $\theta'=\theta+\Pi^*\eta$, where $\eta$ is a closed, $i\mathbb{R}$-valued 
2-form on $M$, we still have 
$(t^*-s^*)\theta'=dA$ and $d\theta'=\Pi^*\kappa$. Finally, $\frac{i}{2\pi} \kappa$ represents the image of the Dixmier-Douady class $c$ of the
bundle gerbe $(P,Y)$ over $M$ in the de Rham cohomology of $M$.

In the sequel, we will often call $(A,\theta)$ a {\it connective structure on} $(P,Y)$. \\

Now consider a class $c\in H^3(M,\mathbb{Z})$, and $Y\rightarrow M$ a principal $PU(H)$-bundle with Dixmier-Douady class equal to $c$. Then the lifting bundle gerbe associated to the exact sequence $S^1\rightarrow U(H) \rightarrow PU(H)$ also has Dixmier-Douady class $c$ (see e.g. \cite{MR2681698}). This will be useful in the proof of the next theorem, which will allow us to define a global geometric prequantisation of a pre-2-plectic manifold.\\

\begin{thm}\label{GGPQ}
Let $\omega$ be an integral, closed 3-form on a finite-dimensional manifold $M$. 
Then there exists a smooth principal $PU(H)$-bundle $\Pi: Y\rightarrow M$, 
such that the associated lifting bundle gerbe $(P,Y)$ has a connective structure $(A,\theta)$ whose 3-curvature equals
$(-2\pi i) \omega$ and, a fortiori, the bundle $\Pi$ has Dixmier-Douady class $[\omega]$.
\end{thm}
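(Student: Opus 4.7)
The plan is to first construct $Y$ from the classification of principal $PU(H)$-bundles by $H^3(M,\mathbb{Z})$, then to equip the associated lifting bundle gerbe $(P,Y)$ with a multiplicative connection and a curving via the previous results, and finally to exploit the freedom in the choice of curving to arrange that the 3-curvature equals $(-2\pi i)\omega$ on the nose, not merely up to an exact form on $M$.

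First I would choose a lift $c\in H^3(M,\mathbb{Z})$ of the de Rham class $[\omega]$; such a lift exists because $\omega$ is integral, and is unique only up to torsion. The classification of $PU(H)$-bundles recalled right after Definition 1.2 then supplies a smooth principal $PU(H)$-bundle $\Pi: Y\to M$ whose Dixmier-Douady class equals $c$. The associated lifting bundle gerbe $(P,Y)$, built from the central extension $S^1\to U(H)\to PU(H)$ of Lemma 1.8 via the cocycle $\psi: Y^{[2]}\to PU(H)$ of Lemma 1.10, then has the same Dixmier-Douady class $c$, as recalled immediately before the theorem.

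Next I would construct a connective structure on $(P,Y)$. By Proposition 1.4 applied to the principal $S^1$-bundle $P\to Y^{[2]}$ there exists a smooth principal connection 1-form, and by the averaging procedure recalled from Section 4.4 of \cite{MR2681698} this can be refined to a \emph{multiplicative} connection $A$. The multiplicativity property forces the curvature of $A$ to descend to a curving $\theta\in i\Omega^2(Y)$ with $dA=(t^*-s^*)\theta$; since $(t^*-s^*)d\theta=d(t^*-s^*)\theta=0$, a 3-curvature $\kappa\in i\Omega^3(M)$ satisfying $d\theta=\Pi^*\kappa$ then exists.

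The last step is purely cohomological. By the general fact recalled in the text, $\tfrac{i}{2\pi}\kappa$ represents the image of $c$ in $H^3_{dR}(M)$, which by choice of $c$ equals $[\omega]_{dR}$. Hence the closed 3-form $(-2\pi i)\omega-\kappa$ on $M$ is de Rham exact; write it as $d\eta$ with $\eta\in i\Omega^2(M)$, and set $\theta':=\theta+\Pi^*\eta$. This is still a legitimate curving for $A$, because $\Pi\circ s=\Pi\circ t$ implies $(t^*-s^*)\Pi^*\eta=0$ and therefore $(t^*-s^*)\theta'=dA$; and its 3-curvature is $\kappa+d\eta=(-2\pi i)\omega$, as required. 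The main technical obstacle in the whole argument sits already in the preparation: namely, the construction of a multiplicative refinement of an arbitrary principal $S^1$-connection on $P\to Y^{[2]}$, which uses the simplicial structure of the nerve of $Y^{[2]}\rightrightarrows Y$ and a partition of unity averaging; once this step is granted from the cited reference, the remainder of the proof is routine.
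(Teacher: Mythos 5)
Your overall route is the paper's route: pick a lift $c\in H^3(M,\mathbb{Z})$ of $[\omega]$, take the classified $PU(H)$-bundle and its lifting bundle gerbe, produce a multiplicative connection $A$ with curving $\theta$ and 3-curvature $\kappa$, and then correct $\theta$ by $\Pi^*\eta$ with $d\eta=(-2\pi i)\omega-\kappa$, which is legitimate because $(t^*-s^*)\Pi^*\eta=0$. That final cohomological adjustment is exactly what the paper does (and your bookkeeping of the $-2\pi i$ normalisation is if anything cleaner than the paper's).

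There is, however, one genuine gap, and it sits precisely where you declare the argument ``routine'': the existence of the \emph{initial} principal connection on $P\to Y^{[2]}$. You invoke Proposition 1.4, but that proposition requires the base of the bundle to be smoothly paracompact, and $Y^{[2]}$ is an infinite-dimensional Banach manifold modelled (locally) on $\mathbb{R}^{\dim M}\times\mathfrak{pu}(H)\times\mathfrak{pu}(H)$ with the norm topology. Smooth paracompactness is a delicate property for such manifolds --- it requires the model Banach space to admit smooth bump functions, which fails for many non-separable spaces of operators --- and it is neither verified in the paper nor plausibly verifiable here. This is exactly why the paper proves Lemma 1.6: it constructs a connection on the $S^1$-bundle $U(H)\to PU(H)$ \emph{explicitly}, by exhibiting a closed complement $\mathfrak{m}=\ker\varphi$ of $\mathfrak{z}$ in $\mathfrak{u}(H)$ (via the functional $\varphi(\xi)=-i\langle v,\xi(v)\rangle$) which is $\mathrm{Ad}$-invariant under the central subgroup $S^1$, and composing the Maurer--Cartan form with the resulting projection onto $\mathfrak{z}$. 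Since $P\to Y^{[2]}$ is by construction the pullback $\psi^*\bigl(U(H)\to PU(H)\bigr)$, one then pulls this connection back along $\psi$, and no partition of unity on $Y^{[2]}$ is ever needed. (The subsequent multiplicative refinement only uses the exactness of the fundamental complex of the submersion $Y\to M$, which rests on a partition of unity on the finite-dimensional base $M$, so that step is indeed unproblematic.) Replacing your appeal to Proposition 1.4 by this pullback of the Lemma 1.6 connection closes the gap and makes your argument coincide with the paper's.
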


\begin{proof}
We know that there exists a principal $PU(H)$-bundle $\Pi: Y\rightarrow M$ with 
Dixmier-Douady class projecting to $[\omega]$. Consider the lifting bundle gerbe $(P,Y)$ associated to this bundle. We know from 
Lemma 1.6 that 
there exists a principal connection on $U(H)\rightarrow PU(H)$, which we pullback to $P\rightarrow Y^{[2]}$ via $\psi$. (Recall that the principal bundle $P\rightarrow Y^{[2]}$ can be realized as the pullback of  $U(H)\rightarrow PU(H)$  via $\psi$.) If necessary, we modify this connection and we can 
assume it to be multiplicative. Let us denote the resulting connection 1-form by $A$. Then the above considerations imply
that there exists $\theta'\in\Omega^2(Y)$ such that $(t^*-s^*)\theta'=dA$ and $\omega'\in\Omega^3(M)$ such that $d\theta'=\Pi^*\omega'$. Since 
$[\omega-\omega']=0$ there exists a 2-form $\kappa\in\Omega^2(M)$ such that $\omega=\omega'+d\kappa$. Consider $\theta=\theta'+\Pi^*\kappa$. Replacing $\theta'$ by $\theta$ in the connective structure of the lifting bundle gerbe does not change the connection of the principal $S^1$-bundle over $Y^{[2]}$, and now we have $d\theta=d\theta'+\Pi^*d\kappa=\Pi^*(\omega'+d\kappa)=\Pi^*(\omega)$. 
\end{proof}

\begin{defa}
A \textit{global geometric prequantisation} of an integral pre-2-plectic manifold $(M,\omega)$ is given by a principal $PU(H)$-bundle
$\Pi: Y\rightarrow M$, and a bundle gerbe $(P,Y)$ over $M$ with 
connective structure $(A,\theta)$ whose 3-curvature is equal to $(-2\pi i) \omega$. 
\end{defa}

\begin{rem}
In Definition 2.2.4 of \cite{MR3241135} a {\it prequantisation} is essentially a connective structure with 3-curvature equal
to  $(-2\pi i) \omega$. Our definition asks for a more complete set-up for
the connection to arise, notably a principal $PU(H)$-bundle and a 
gerbe (or equivalently, a central $S^1$-extension of groupoids) on its total space having the ``right'' 3-curvature. 
In \cite{2020arXiv200312874K}, the notion of a quantisation 
used is closer to ours,
but the analogon of our $PU(H)$-bundle projection $\Pi$ is there only a surjective submersion.

\end{rem}

%
%

\section{Infinitesimal symmetries of bundle gerbes}

In this section we turn our attention to \textit{multiplicative} vector fields on groupoids, intensely studied by Kirill Mackenzie
(see \cite{MR1617335} or \cite{MR4023383} for a more complete treatment of multiplicative tensors on Lie groupoids). 

\begin{defa}
Let $G_1\rightrightarrows G_0$ be a Lie groupoid. A \textit{multiplicative vector field (on 
$G_1\rightrightarrows G_0$)} is given by a couple
$(\xi,X)$  in  $\mathfrak{X}(G_1)\oplus \mathfrak{X}(G_0)$ such that the following diagram is a Lie groupoid morphism:

\begin{center}
\begin{tikzcd}
G_1  \arrow[r,"\xi"] \arrow[d, shift right=0.7ex]   \arrow[d, shift left=0.6ex]
&  TG_1  \arrow[d, shift right=0.7ex] \arrow[d, shift left=0.6ex]
\\
G_0 \arrow[r,"X"] 
&  TG_0    
\end{tikzcd}  
\end{center}

The Lie algebra of multiplicative vector fields is denoted by $\mathfrak{X}_{mult}(G_1)$.

\end{defa}

For the rest of the section, we will denote by $M$ a manifold, by $(P,Y)$  a bundle gerbe over $M$, by $A$ a multiplicative 
connection on $P\rightarrow Y^{[2]}$, and by $\theta$ resp. $\omega$ a curving resp.
the 3-curvature of $(A,\theta)$.  As discussed earlier, we have a natural Lie groupoid structure $P\rightrightarrows Y$, centrally 
extending $Y^{[2]}\rightrightarrows Y$ by $S^1$. We denote by $s,t$ the source resp. target maps of this groupoid, $m$ 
its partial multiplication and $\varepsilon$ its unit map. We denote the principal $S^1$-bundle projection 
$P\rightarrow Y^{[2]}$ again by $\rho$. Observe that $\rho=(t,s)$.\\

We will consider multiplicative vector fields on the Lie groupoid $P\rightrightarrows Y$ in more detail now, starting 
with two important lemmas.

\begin{lem}
Let $(\xi,X)$ be a multiplicative vector field on the Lie groupoid $P\rightrightarrows Y$. Then $\xi$ is $S^1$-invariant.
\end{lem}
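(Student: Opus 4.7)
The strategy is to produce a formula for $\xi$ on the isotropy subbundle $\chi(Y\times S^1)\subset P$ by using that the local flow $\phi_t$ of $\xi$ is a local Lie groupoid automorphism (a consequence of multiplicativity), and then to combine it with the multiplicativity relation applied to the defining formula $p\cdot z=m(p,\chi(s(p),z))$ of the $S^1$-action.

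Since $(\xi,X)$ is multiplicative, its local flow $\phi_t$ on $P$ is a local Lie groupoid automorphism of $P\rightrightarrows Y$ covering the local flow $\phi_t^X$ of $X$. In particular $\rho\circ\phi_t=(\phi_t^X\times\phi_t^X)\circ\rho$, so $\phi_t$ sends $\rho^{-1}(y,y)=\chi(\{y\}\times S^1)$ to $\rho^{-1}(\phi_t^X(y),\phi_t^X(y))=\chi(\{\phi_t^X(y)\}\times S^1)$. Writing $\phi_t(\chi(y,z))=\chi(\phi_t^X(y),g_t(y,z))$ for a smooth $g_t:Y\times S^1\to S^1$ and using that both $\phi_t$ and $\chi(y,\cdot)$ respect the groupoid multiplication, each $g_t(y,\cdot):S^1\to S^1$ must be a Lie group homomorphism. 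Every such homomorphism has the form $z\mapsto z^{n(t,y)}$ with $n(t,y)\in\mathbb{Z}$, and continuity in $t$ starting from $g_0=\mathrm{id}$ forces $n(t,y)\equiv 1$. Differentiating $\phi_t(\chi(y,z))=\chi(\phi_t^X(y),z)$ at $t=0$ yields the key formula
$$\xi(\chi(y,z))=d\chi_{(y,z)}(X(y),0_z).$$

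With this in hand the invariance is immediate. By multiplicativity of $\xi$ applied to the composable pair $(p,\chi(s(p),z))$,
$$\xi(p\cdot z)=dm\bigl(\xi(p),\xi(\chi(s(p),z))\bigr)=dm\bigl(\xi(p),d\chi_{(s(p),z)}(X(s(p)),0_z)\bigr).$$
Conversely, differentiating $R_z(p(u))=m(p(u),\chi(s(p(u)),z))$ along a curve $p(u)$ through $p$ with $\dot p(0)=\xi(p)$, and using the multiplicativity identity $ds\circ\xi=X\circ s$, one obtains the same expression for $dR_z(\xi(p))$. Hence $\xi(p\cdot z)=dR_z(\xi(p))$, which is the desired $S^1$-invariance.

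The only delicate point is the identification $g_t(y,z)=z$: it rests on the discreteness of $\mathrm{Hom}_{\mathrm{Lie}}(S^1,S^1)\cong\mathbb{Z}$ together with continuity in $t$, a slightly non-infinitesimal ingredient in an otherwise routine unfolding of the multiplicativity condition.
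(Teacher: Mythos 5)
Your proof is correct, and its second half --- propagating the invariance from the kernel $\chi(Y\times S^1)$ to all of $P$ via $\xi(p\cdot z)=dm\bigl(\xi(p),\xi(\chi(s(p),z))\bigr)=dR_z(\xi(p))$ --- coincides with the paper's final computation. Where you genuinely differ is in establishing the key identity $\xi(\chi(y,z))=z_*\xi_{\varepsilon(y)}$ on the kernel. The paper stays infinitesimal and uses the multiplicative connection $A$ fixed throughout the section: multiplicativity of $\xi$ and of $A$ force $z\mapsto A_{\varepsilon(y)\cdot z}(\xi_{\varepsilon(y)\cdot z})$ to be a continuous homomorphism $S^1\to(\mathbb{R},+)$, hence zero, so $\xi$ is horizontal along the isotropy circles, and the horizontal/vertical splitting combined with $\rho_*\xi=X_2\circ\rho$ then pins it down. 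You instead integrate: the local flow of a multiplicative vector field is a local groupoid automorphism, hence restricts to group homomorphisms between isotropy circles, and the discreteness of $\mathrm{End}(S^1)\cong\mathbb{Z}$ together with continuity in $t$ and $g_0=\mathrm{id}$ forces these to be the identity in the $S^1$-coordinate. Your route has the merit of being connection-free --- it makes visible that the lemma holds for any $S^1$-central extension, independently of the auxiliary choice of $A$ --- at the price of invoking the flow characterization of multiplicativity (Mackenzie--Xu) and a degree/discreteness argument, where the paper's argument is purely infinitesimal. Both proofs ultimately rest on the same kind of rigidity of continuous homomorphisms out of $S^1$ (into $\mathbb{R}$ for the paper, into $S^1$ for you).
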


\begin{proof}
In this proof, we identify the Lie algebra of $S^1$ with $\mathbb{R}$. We begin with a computation, for $y\in Y$, $z,z'\in  S^1$:
\begin{equation*}
\begin{split}
A_{\varepsilon(y)\cdot zz'}(\xi_{\varepsilon(y)\cdot zz'})&=A_{m(\varepsilon(y)\cdot z,\varepsilon(y)\cdot z')}(\xi_{m(\varepsilon(y)\cdot z,\varepsilon(y)\cdot z')}) \\
&=A_{m(\varepsilon(y)\cdot z,\varepsilon(y)\cdot z')}(m_{*}(\xi_{\varepsilon(y)\cdot z},\xi_{\varepsilon(y)\cdot z'})) \\
&=A_{\varepsilon(y)\cdot z}(\xi_{\varepsilon(y)\cdot z})+A_{\varepsilon(y)\cdot z'}(\xi_{\varepsilon(y)\cdot z'}).
\end{split}
\end{equation*}
Thus we obtain, for a fixed $y\in Y$, a smooth map $F:S^1\rightarrow \mathbb{R}$, $F(z)=A_{\varepsilon(y)\cdot z}(\xi_{\varepsilon(y)\cdot z})$, such that $F(zz')=F(z)+F(z')$. This shows that $F$ must be zero, and therefore that for every $y\in Y$, $z\in S^1$, $\xi_{\varepsilon(y)\cdot z}$ is horizontal. Since the horizontal subbundle $H\subset TP$ is invariant under $S^1$, this also shows that there exists a horizontal vector $u(y,z)\in H_{\varepsilon(y)}$ such that $\xi_{\varepsilon(y)\cdot z}=z_{*}(u(y,z))$ (where $z$ also denotes the right-action of an element $z\in S^1$). For $z=1$ we obtain $u(y,1)=\xi_{\varepsilon(y)}=\varepsilon_{*}(X_y)$. Moreover we have, using the multiplicativity of $\xi$:

\begin{equation*}
\begin{split}
\rho_{*}(u(y,z))&=\rho_{*}(\xi_{\varepsilon(y)\cdot z}) \\
&=(s,t)_{*}(\xi_{\varepsilon(y)\cdot z}) \\
&=(X_y,X_y) \\
&=\rho_{*}(u(y,1)) \, .
\end{split}
\end{equation*}

So the vector $u(y,z){-}u(y,1)$ is both horizontal and vertical, and therefore must be zero. Thus we obtain that 
$\xi_{\varepsilon(y)\cdot z}=z_{*}(\xi_{\varepsilon(y)})$.\\

Finally we compute, for $(y,y')\in Y^{[2]}$, $p\in P_{(y,y')}$ and $z\in S^1$ (using again the multiplicativity of $\xi$):

\begin{equation*}
\begin{split}
\xi_{p\cdot z}&=\xi_{m(p,\varepsilon(y')\cdot z)}  \\
&=m_{*}(\xi_p,\xi_{\varepsilon(y')\cdot z}) \\
&=m_{*}(\xi_p,z_{*}\xi_{\varepsilon(y')}) \\
&=z_{*}(m_{*}(\xi_p,\xi_{\varepsilon(y')})) \\
&=z_{*}\xi_p.
\end{split}
\end{equation*}

This concludes the proof.
\end{proof}

\begin{rem}
Observe that, if $(\xi,X)$ is a multiplicative vector field on $P\rightrightarrows Y$, then we have $s_*\xi=X\circ s$, $t_*\xi=X\circ t$, and, 
since $\Pi\circ s=\Pi\circ t$, we obtain for all $(y,y')\in Y^{[2]}$:
\begin{equation*}
\Pi_{*y}X_y=\Pi_{*y'}X_{y'}.
\end{equation*}
This shows that the vector field $X$ is \textit{projectable along $\Pi$}, i.e.,  there exists $\bar{X}\in \mathfrak{X}(M)$ such that 
$\Pi_*X=\bar{X}\circ\Pi$. Thus, for $(y,y')\in Y^{[2]}$ the map:

\begin{equation*}
(y,y')\mapsto (X_y,X_{y'})
\end{equation*}

defines a vector field on $Y^{[2]}$. We will denote this vector field by $X_2$.\\
\end{rem}

These considerations allow us to give a general formula for multiplicative vector fields on $P\rightrightarrows Y$.

\begin{lem}\label{multiplicative-vf-on-P}
Let $(\xi,X)$ be a multiplicative vector field on the Lie groupoid $P\rightrightarrows Y$. Then $\xi$ may be written as:

\begin{equation*}
\widetilde{X_2} + (t^*-s^*)g\cdot\mathfrak{z},
\end{equation*}

where $\widetilde{X_2}$ is the horizontal lift of the vector field $X_2$ with respect to the multi\-plicative connection $A$, 
$g\in C^{\infty}(Y)$ and $\mathfrak{z}$ is the fundamental vector field associated to the $S^1$-action on $P$. The function $g$ 
is unique up to addition of  $\Pi^*f$ for a $f\in C^{\infty}(M)$.
\end{lem}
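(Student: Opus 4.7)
The plan is to decompose $\xi$ as a canonical horizontal piece coming from $X$ plus a vertical piece, and then to analyse the vertical piece via the multiplicativity condition. First I would verify that $\widetilde{X_2}$ is itself a multiplicative vector field on $P\rightrightarrows Y$ with base vector field $X$: since $X$ is projectable along $\Pi$ (cf.\ Remark 2.3), the pair $(X_2,X)$ is plainly a multiplicative vector field on the pair groupoid $Y^{[2]}\rightrightarrows Y$, and the multiplicativity of the connection $A$ upgrades its horizontal lift accordingly (for composable $p,p'$, both $\widetilde{X_2}_{m(p,p')}$ and $m_{*}(\widetilde{X_2}_p,\widetilde{X_2}_{p'})$ are horizontal---the second vanishing under $A$ by multiplicativity of $A$---and both project via $\rho$ to $X_2$ evaluated at $m(\rho(p),\rho(p'))$, hence coincide). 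Consequently $\eta:=\xi-\widetilde{X_2}$ is a multiplicative vector field on $P\rightrightarrows Y$ whose base vector field on $Y$ is zero.

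Next I would identify $\eta$. Since its base vanishes, $s_{*}\eta=0=t_{*}\eta$, so $\eta$ is tangent to the fibres of $\rho=(t,s)$, i.e., to the $S^1$-orbits, giving $\eta=h\cdot\mathfrak{z}$ for a unique $h\in C^{\infty}(P)$. Both $\xi$ (by Lemma 2.2) and $\widetilde{X_2}$ (since horizontal lifts with respect to an $S^1$-connection are $S^1$-invariant) are $S^1$-invariant, so $h$ is as well, and hence $h=\rho^{*}\tilde{h}$ for some $\tilde{h}\in C^{\infty}(Y^{[2]})$. Centrality of $S^1$ in $P\rightrightarrows Y$ gives $m_{*}(\mathfrak{z}_p,0_{p'})=\mathfrak{z}_{m(p,p')}=m_{*}(0_p,\mathfrak{z}_{p'})$, so plugging $\eta=(\rho^{*}\tilde{h})\cdot\mathfrak{z}$ into the multiplicativity identity $\eta_{m(p,p')}=m_{*}(\eta_p,\eta_{p'})$ produces the cocycle relation
\[
\tilde{h}(y_1,y_3)=\tilde{h}(y_1,y_2)+\tilde{h}(y_2,y_3)\qquad\text{for all }(y_1,y_2,y_3)\in Y^{[3]}.
\]

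The main obstacle will then be to solve the cocycle equation $\tilde{h}=t^{*}g-s^{*}g$ by a single smooth $g\in C^{\infty}(Y)$; this would be immediate if $\Pi$ admitted a global section, but in the principal $PU(H)$-bundle setting one must patch local solutions. I would choose an open cover $\{U_\alpha\}_{\alpha\in\Lambda}$ of $M$ with smooth local sections $\sigma_\alpha:U_\alpha\to Y$ of $\Pi$ and a subordinate smooth partition of unity $\{\chi_\alpha\}$, set $g_\alpha(y):=\tilde{h}(y,\sigma_\alpha(\Pi(y)))$ on $\Pi^{-1}(U_\alpha)$ (so that the cocycle condition gives $g_\alpha(y_1)-g_\alpha(y_2)=\tilde{h}(y_1,y_2)$ whenever $\Pi(y_1)=\Pi(y_2)\in U_\alpha$), and then define $g:=\sum_\alpha(\Pi^{*}\chi_\alpha)\cdot g_\alpha$. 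Since $\Pi\circ s=\Pi\circ t$ on $Y^{[2]}$, each factor $\Pi^{*}\chi_\alpha$ is constant along the fibres of $\rho$, and a direct computation gives $t^{*}g-s^{*}g=\sum_\alpha(\Pi^{*}\chi_\alpha)\cdot\tilde{h}=\tilde{h}$. Uniqueness is immediate: if $g,g'\in C^{\infty}(Y)$ both satisfy the equation, then $t^{*}(g-g')=s^{*}(g-g')$ on $Y^{[2]}$ forces $g-g'$ to be constant on each $\Pi$-fibre, hence of the form $\Pi^{*}f$ for some $f\in C^{\infty}(M)$.
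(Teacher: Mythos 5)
Your proof is correct and follows essentially the same route as the paper's: decompose $\xi$ into the horizontal lift $\widetilde{X_2}$ plus an $S^1$-invariant vertical multiplicative part $(\rho^*\tilde{h})\cdot\mathfrak{z}$, derive the cocycle identity $\tilde{h}(y_1,y_3)=\tilde{h}(y_1,y_2)+\tilde{h}(y_2,y_3)$ on $Y^{[3]}$ from multiplicativity, and conclude that $\tilde{h}=(t^*-s^*)g$. The only substantive difference is that you spell out, via local sections of $\Pi$ and a partition of unity, why the cocycle equation admits a global smooth primitive $g$ --- a step the paper compresses into the assertion that $h$ ``descends to a function on $Y$'' --- which is a welcome addition of detail rather than a change of method.
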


\begin{proof}
Since $\xi$ is $S^1$-invariant, it can be written as
\begin{equation*}
\widetilde{Z}+\rho^*h\cdot\mathfrak{z} \, ,
\end{equation*}
where $Z\in\mathfrak{X}(Y^{[2]})$, $\widetilde{Z}$ is its horizontal lift with respect to $A$ and $h\in C^{\infty}(Y^{[2]})$. We have for $p\in P_{(y,y')}$ 
\begin{equation*}
\rho_{*p}(\widetilde{Z}_p+(\rho^*h\cdot\mathfrak{z})_p)=Z_{(y,y')}
\end{equation*}
because $\widetilde{Z}$ is $\rho$-related to $Z$. On the other hand
\begin{equation*}
\rho_{*p}(\widetilde{Z}_p+(\rho^*h\cdot\mathfrak{z})_p)=(t,s)_{*p}(\widetilde{Z}_p)=(X_y,X_{y'})
\end{equation*}
because $(\xi, X)=(\widetilde{Z}+\rho^*h\cdot\mathfrak{z},X)$ is multiplicative. This shows that $Z=X_2$. Thus $(\widetilde{X_2},X)$ is multiplicative, and 
the vector field $(\rho^*h\cdot\mathfrak{z},0)$ must be multiplicative as well.\\

We will now show the following identity for all $y,y',y''\in Y^{[3]}$:

\begin{equation*}
h(y,y')+h(y',y'')=h(y,y')
\end{equation*}

implying in turn that the function $h$ descends to a function on $Y$. We have, for all $p\in P_{(y,y')}$, $p'\in P_{(y',y'')}$

\begin{equation*}
\begin{split}
h(y,y')+h(y',y'')&=A_p((\rho^*h\cdot\mathfrak{z})_p)+A_{p'}((\rho^*h\cdot\mathfrak{z})_{p'}) \\
&=(m^*A)_{(p,p')}((\rho^*h\cdot\mathfrak{z})_p,(\rho^*h\cdot\mathfrak{z})_{p'}) \\
&=A_{m(p,p')}(m_{*(p,p')}((\rho^*h\cdot\mathfrak{z})_p,(\rho^*h\cdot\mathfrak{z})_{p'})) \\
&=A_{m(p,p')}((\rho^*h\cdot\mathfrak{z})_{m(p,p')}) \\
&=h(y,y'') \, ,
\end{split}
\end{equation*}

where we used the multiplicativity of $\rho^*h\cdot\mathfrak{z}$ to obtain the fourth equality. Thus the function $h\in C^{\infty}(Y^{[2]})$ descends to a function $g\in C^{\infty}(Y)$. Finally, since $(t^*-s^*)\circ \Pi^*=0$, the function $g\in C^{\infty}(Y)$ is unique only up to addition of $\Pi^*f$ with $f$ a smooth function on $M$.
\end{proof}

In \cite{2016arXiv160903944B} the authors construct a strict Lie 2-algebra structure on multiplicative vector fields. In fact, this construction generalizes to any $\mathscr{LA}$-groupoid (\cite{MR3985597}). See the appendix for the relevant definitions and notations concerning Lie 2-algebras and crossed modules of Lie algebras, as well as this construction. 
We will here, however, describe the Lie 2-algebra of multiplicative vector fields more explicitely, in a fashion taylored to our needs.\\

Consider the extension of groupoids 

\begin{center}
\begin{tikzcd}
Y\times S^1 \arrow[r] \arrow[d, shift right=0.7ex]   \arrow[d, shift left=0.6ex]
& P \arrow [r] \arrow[d, shift right=0.7ex]   \arrow[d, shift left=0.6ex]
& Y^{[2]} \arrow[d, shift right=0.7ex]   \arrow[d, shift left=0.6ex] \\
Y \ar[equal]{r}
& Y \ar[equal]{r}
& Y
\end{tikzcd}  
\end{center}

We obtain an exact sequence of vector bundles over $Y$ 

\begin{center}
\begin{tikzcd}
Y\times \mathbb{R} \arrow[r] \arrow[d] 
& \hbox{Lie}(P) \arrow [r] \arrow[d]
& T_VY \arrow[d] \\
Y \ar[equal]{r}
& Y \ar[equal]{r}
& Y
\end{tikzcd}
\end{center}

where the vector  bundles considered here are the Lie algebroids of the respective Lie groupoids. Here we have denoted by
$\hbox{Lie}(P)=(\ker s_*)\vert_{\varepsilon(Y)}\subset TP\vert_{\varepsilon(Y)}$ the Lie algebroid of the Lie groupoid 
$P\rightrightarrows Y$, and $T_V Y:=\ker(\Pi_{*})$, where $\Pi$ is the projection from $Y$ to $M$. The sections of the 
vector bundle $T_V Y$ form the space of vertical vector fields,
denoted by $\mathfrak{X}_V(Y)$. The map $t_* : \hbox{Lie}(P)\to T_VY$ is the anchor of the Lie algebroid 
$\hbox{Lie}(P)$.\\

The multiplicative connection $A$ defines an explicit  
splitting $\sigma:\mathfrak{X}_V(Y)\rightarrow \Gamma(Y, \hbox{Lie}(P))$. First let $Z\in\mathfrak{X}_V(Y)$. Then the map:
\begin{equation*}
(y,y')\mapsto (Z_y,0)
\end{equation*}

for $(y,y')\in Y^{[2]}$, defines a vector field on $Y^{[2]}$, since $Z$ is vertical. We denote this vector field by $(Z,0)$. We also set 
$(0,Z):(y,y')\mapsto (0,Z_{y'})$. Then we define:

\begin{equation*}
\sigma(Z)(y):=\widetilde{(Z,0)}_{\varepsilon(y)}  \, .
\end{equation*}

This map $\sigma$ is indeed a splitting of the above sequence of vector bundles (but not of algebroids!) 
since $t_{*}(\sigma(Z)(y))=t_{*}(\widetilde{(Z,0)}_{\varepsilon(y)})=Z_y$. 
Thus we have an isomorphism of $C^{\infty}(Y)$-modules 
$\mathfrak{X}_V(Y)\oplus C^{\infty}(Y)\xrightarrow{}\Gamma(Y, \hbox{Lie}(P)), \, (Z,h)\mapsto \sigma(Z)+ \kappa(h)$, 
where $\kappa(h)(y)=h (y) \cdot\mathfrak{z}|_{\varepsilon(y)}$. This isomorphism allows us to describe the strict Lie 2-algebra structure on $\mathfrak{X}_V(Y)\oplus C^{\infty}(Y)\rightarrow \mathfrak{X}_{mult}(P)$ in useful detail.\\

\begin{rem}(Compare \cite{MR2157566} and \cite{MR1617335}). We recall several facts concerning Lie groupoids and algebroids, which will be useful in the following. Let $G_1\rightrightarrows G_0$ be a Lie groupoid and $\hbox{Lie}(G_1)\rightarrow G_0$ the associated Lie algebroid. 
\begin{enumerate}[label=(\arabic*)]
\item The sections of $\hbox{Lie}(G_1)$ are isomorphic as a $C^{\infty}(G_0)$-module to the right-invariant resp. left invariant vector fields 
on $G_1$ via standard identifications $a\mapsto \overrightarrow{a}$ resp. $a\mapsto \overleftarrow{a}$. 
\item The Lie bracket on $\Gamma(G_0,\hbox{Lie}(G_1))$ is inferred from the right-invariant vector fields on $G_1$.
\item The map $\Gamma(G_0,\hbox{Lie}(G_1))\rightarrow \mathfrak{X}_{mult}(G_1)$, $a\mapsto (\overrightarrow{a}+\overleftarrow{a},X(a))$, where $X(a)$ is the vector field on $G_0$ associated to $a$ via the anchor map of $\hbox{Lie}(G_1)$, is a Lie algebra morphism. 
\end{enumerate}
\end{rem}

We can now give explicit formulas for the crossed module $\Gamma(Y,\hbox{Lie}(P))\rightarrow \mathfrak{X}_{mult}(P)$.

\begin{prop}
Let $Y\xrightarrow{\Pi}M$, $P\xrightarrow{\rho} Y^{[2]}$ and $(A,\theta)$ be as throughout this section. Then:
\begin{enumerate}[label=(\roman*)]
\item $\mathfrak{X}_V(Y)\oplus C^{\infty}(Y)\rightarrow \Gamma(Y,\hbox{Lie}(P))$, $(Z,h)\mapsto \sigma(Z)+\kappa(h)$, is a $C^{\infty}(Y)$-module 
isomorphism,
\item the Lie bracket on $\Gamma(Y,\hbox{Lie}(P))$ induces the following bracket on $\mathfrak{X}_V(Y)\oplus C^{\infty}(Y)$: 
\newline $[(Z,h),(Z',h')]=([Z,Z'],Z(h')-Z'(h)-\theta(Z,Z'))$,
\item the Lie algebra morphism $\Gamma(Y,\hbox{Lie}(P))\rightarrow \mathfrak{X}_{mult}(P)$, 
$a\mapsto (\overrightarrow{a}+\overleftarrow{a}, X(a) )$ 
translates to $\mathfrak{X}_V(Y)\oplus C^{\infty}(Y) \xrightarrow{ \overset{\circ}{\eta}} \mathfrak{X}_{mult}(P)$, 
\newline $(Z,h)\mapsto (\widetilde{Z_2}+ (t^*-s^*)h\cdot\mathfrak{z},Z)$,
\item the action of $\mathfrak{X}_{mult}(P)$ on $\Gamma(Y,\hbox{Lie}(P))$ given by $(\xi,X).a=b$ with 
$\overrightarrow{b}=[\xi,\overrightarrow{a}]$ induces the following action on 
$\mathfrak{X}_V(Y)\oplus C^{\infty}(Y)$: \newline $(\xi,X).(Z,h)=([X,Z],X(h)-Z(g)-\theta(X,Z))$, where 
$\xi = \widetilde{X_2} + (t^*-s^*)g\cdot\mathfrak{z}$ as in Lemma \ref{multiplicative-vf-on-P}.
\end{enumerate}
\end{prop}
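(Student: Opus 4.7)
The plan is to establish the four claims in order, each building on the structures set up in the earlier parts and relying on the identification of algebroid sections with right- and left-invariant vector fields on $P$. For (i), I would check that $\sigma$ is a $C^\infty(Y)$-linear splitting of the anchor $t_* : \hbox{Lie}(P) \to T_V Y$: it takes values in $(\ker s_*)|_{\varepsilon(Y)}$ because $\widetilde{(Z,0)}$ is $\rho$-related to $(Z,0)$, and $t_* \sigma(Z) = Z$ by the computation already recalled in the text. Combined with the fact that $\kappa$ identifies $C^\infty(Y)$ with the $\mathfrak{z}$-direction kernel of the anchor, the short exact sequence of vector bundles over $Y$ recalled above yields the asserted isomorphism.

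For (ii) and (iii), the key intermediate observation is that multiplicativity of the connection, $m^* A = \mathrm{pr}_1^* A + \mathrm{pr}_2^* A$, applied to tangent pairs of the form $(v, 0)$, yields $R_p^* A = A$; hence right translation preserves the horizontal distribution. A short check on $s$-fibers then shows that the right-invariant extension of $\sigma(Z)(y) = \widetilde{(Z,0)}_{\varepsilon(y)}$ coincides with the horizontal lift $\widetilde{(Z,0)}$ as a vector field on all of $P$, while centrality of the $S^1$-action gives $\overrightarrow{\kappa(h)} = (t^* h) \cdot \mathfrak{z}$. The analogous computations for left translation, combined with the identities $\iota^* A = -A$ for the groupoid inversion $\iota: p\mapsto p^{-1}$ (which follows from multiplicativity applied at $(p,p^{-1})$ and $\varepsilon^* A = 0$) and $\iota_* \mathfrak{z}|_{\varepsilon(y)} = -\mathfrak{z}|_{\varepsilon(y)}$, produce $\overleftarrow{\sigma(Z)} = \widetilde{(0, Z)}$ and $\overleftarrow{\kappa(h)} = -(s^* h) \cdot \mathfrak{z}$. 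Summing these proves (iii): $\overrightarrow{a} + \overleftarrow{a} = \widetilde{Z_2} + (t^* - s^*) h \cdot \mathfrak{z}$, with base component $X(a) = t_* a = Z$.

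With these identifications in hand, the bracket in (ii) is extracted from the curvature identity
\[
[\widetilde{U}, \widetilde{V}] = \widetilde{[U, V]} - (t^* - s^*)\theta(U, V) \cdot \mathfrak{z}
\]
for horizontal lifts of $U, V \in \mathfrak{X}(Y^{[2]})$, which itself follows from $dA = (t^* - s^*)\theta$ together with the standard relation $A([\widetilde{U}, \widetilde{V}]) = -dA(\widetilde{U}, \widetilde{V})$ for horizontal fields. Evaluated at $\varepsilon(y)$ with $U = (Z, 0)$ and $V = (Z', 0)$, the $s^*\theta$-term vanishes and the correction becomes $-\theta(Z, Z') \cdot \mathfrak{z}|_{\varepsilon(y)}$; the mixed brackets $[\widetilde{(Z, 0)}, (t^* h') \mathfrak{z}]$ and $[(t^* h) \mathfrak{z}, \widetilde{(Z', 0)}]$ contribute the Leibniz terms $Z(h')$ and $-Z'(h)$ (using that horizontal lifts commute with $\mathfrak{z}$), while $[\kappa(h), \kappa(h')] = 0$ by centrality. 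For (iv), the same curvature identity applied to $[\xi, \overrightarrow{a}]$ with $\xi = \widetilde{X_2} + (t^* - s^*) g \cdot \mathfrak{z}$ and $\overrightarrow{a} = \widetilde{(Z, 0)} + (t^* h) \cdot \mathfrak{z}$, followed by projection onto the $\sigma$- and $\kappa$-components, recovers $([X, Z], X(h) - Z(g) - \theta(X, Z))$.

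The main obstacle is the sign bookkeeping for the left-invariant extensions, since groupoid inversion contributes a minus sign that acts trivially on the horizontal $\sigma$-part but non-trivially on the vertical $\kappa$-part; once this is pinned down, (ii) and (iv) are essentially parallel applications of the curvature identity together with centrality of $\mathfrak{z}$.
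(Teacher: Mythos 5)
Your proposal is correct and follows essentially the same route as the paper: identifying $\overrightarrow{\sigma(Z)}=\widetilde{(Z,0)}$ and $\overrightarrow{\kappa(h)}=t^{*}h\cdot\mathfrak{z}$ together with their left-invariant analogues, and then extracting the bracket in (ii) and the action in (iv) from the curvature relation $dA=(t^{*}-s^{*})\theta$ applied to horizontal lifts. The only differences are presentational: you make explicit that multiplicativity of $A$ forces right translation to preserve the horizontal distribution and you route the left-invariant computation through the inversion map, whereas the paper computes $\rho_{*}$ of the invariant extensions directly and treats the left-invariant case as ``similar''.
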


\begin{proof}

The first statement has already been proven. Let now $(Z,h)$ be in $\mathfrak{X}_V(Y)\oplus C^{\infty}(Y)$.

The vector field $\overrightarrow{\sigma(Z)}$ is horizontal and:

\begin{equation*}
\begin{split}
\rho_*\left(\overrightarrow{\sigma(Z)}\right)(p)&=\rho_*\circ (R_p)_{*}(\widetilde{(Z,0)}_{\varepsilon(t(p))}) \\
&=(t_*(\widetilde{(Z,0)}_{\varepsilon(t(p)}),0) \\
&=(Z_{t(p)},0) \, .
\end{split}
\end{equation*}

Thus $\overrightarrow{\sigma(Z)}=\widetilde{(Z,0)}$. We continue with the computation

\begin{equation*}
\begin{split}
\overrightarrow{\kappa(h)}(p)&=(R_p)_{*}\left(\frac{d}{d\tau}\Big\vert_{\tau=0} \, \varepsilon(t(p))\cdot \exp(i 2\pi\tau \, h(t(p)))\right) \\
&=\frac{d}{d\tau}\Big\vert_{\tau=0}(p\cdot \exp(i2\pi\tau \, h(t(p))) \\
&=(t^*h\cdot\mathfrak{z})|_p.
\end{split}
\end{equation*}

In view of the $\mathbb{R}$-linear isomorphism from part (i), the Lie bracket on $\mathfrak{X}_V(Y)\oplus C^{\infty}(Y)$ 
can be determined from the below commutator of right-invariant vector fields:

\begin{equation*}
\begin{split}
&[\overrightarrow{\sigma(Z)+ \kappa(h)},\overrightarrow{\sigma(Z')+ \kappa(h')}]\\
&=[\overrightarrow{\sigma(Z)},\overrightarrow{\sigma(Z')}] -[\overrightarrow{\sigma(Z')},\overrightarrow{ \kappa(h)}] + [\overrightarrow{\sigma(Z)},\overrightarrow{ \kappa(h')}] + [\overrightarrow{ \kappa(h)},\overrightarrow{ \kappa(h')}] \\
&=[\widetilde{(Z,0)},\widetilde{(Z',0)}] -[\widetilde{(Z',0)},t^*h\cdot\mathfrak{z}] +[\widetilde{(Z,0)},t^*h'\cdot\mathfrak{z}] \\
&=\widetilde{([Z,Z'],0)} -t^*\theta(Z,Z')\cdot\mathfrak{z} -t^*(Z'(h))\cdot\mathfrak{z} +t^*(Z(h'))\cdot\mathfrak{z} \\
&=\sigma([Z,Z']+ \kappa(Z(h')-Z'(h)-\theta(Z,Z')). \\
\end{split}
\end{equation*}

We deduce that the Lie bracket is given by \newline $[(Z,h),(Z',h')]=([Z,Z'],Z(h')-Z'(h)-\theta(Z,Z'))$.\\

Similarly to the above calculation, one obtains $\overleftarrow{\sigma(Z)}=\widetilde{(0,Z)}$, 
and thus $\overrightarrow{\sigma(Z)}+\overleftarrow{\sigma(Z)}=\widetilde{Z_2}$. 
Also,  $\overleftarrow{\kappa(h)}=-s^*h\cdot\mathfrak{z}$ and 
$\overrightarrow{\kappa(h)}+\overleftarrow{\kappa(h)}=(t^*-s^*)h\cdot\mathfrak{z}$. Therefore the Lie algebra morphism
 $\mathfrak{X}_V(Y)\oplus C^{\infty}(Y)\xrightarrow{\varphi} \mathfrak{X}_{mult}(P)$ is explicitely described by

\begin{equation*}
\overset{\circ}{\eta}(Z,h)=(\widetilde{Z_2}+(t^*-s^*)h\cdot\mathfrak{z},Z).
\end{equation*}

Finally, the action of a multiplicative vector field $(\xi,X)$ on $(Z,h)$ reads

\begin{equation*}
\begin{split}
[\xi,\overrightarrow{\sigma(Z)+ \kappa(h)}]&=[\widetilde{X_2}+(t^*-s^*)g\cdot\mathfrak{z},\widetilde{(Z,0)}+t^*h\cdot\mathfrak{z}] \\
&=\widetilde{([X,Z],0)}-t^*\theta(X,Z)\cdot\mathfrak{z}+t^*X(h)\cdot\mathfrak{z}-t^*Z(g)\cdot\mathfrak{z} \\
&=\sigma([X,Z])+ \kappa(X(h)-Z(g)-\theta(X,Z)). \\
\end{split}
\end{equation*}

Thus the action is given by $[(\xi,X),(Z,h)]=([X,Z],X(h)-Z(g)-\theta(X,Z))$.\\

Since $Z$ is vertical, $Z(\Pi^*f)=0$ for all 
$f\in C^{\infty}(M)$. Accordingly, adding $\Pi^*f$ to $g$ does not change the right hand side in the preceding equation. 
With Lemma 2.4, it follows that the action in (iv) is well-defined. 
\end{proof}

Going back to the Lie 2-algebra structure, we have obtained the following

\begin{prop}\label{MVF=Lie2}
The Lie algebras $\mathfrak{X}_V(Y)\oplus C^{\infty}(Y) \left(\cong \Gamma(Y, \hbox{Lie}(P))\right) $ and $ $ $\mathfrak{X}_{mult}(P)$, together with 
the map $\overset{\circ}{\eta}$ and the action defined above, form a Lie algebra crossed module, hence a strict Lie 2-algebra. We will denote this 
Lie 2-algebra by $\mathfrak{X}_{mult}(P,\Pi):=\mathfrak{X}_{mult}(P,\Pi,A)$. 
\end{prop}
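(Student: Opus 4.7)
The plan is to verify the two axioms of a Lie algebra crossed module, namely the equivariance of $\overset{\circ}{\eta}$ with respect to the action and the Peiffer identity, since all the underlying Lie algebraic structure has already been assembled in Proposition 2.7.

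First I would observe that the bracket on $\mathfrak{X}_V(Y) \oplus C^{\infty}(Y)$ from Proposition 2.7(ii) satisfies the Jacobi identity essentially for free: it is the transport along the $\mathbb{R}$-linear isomorphism in Proposition 2.7(i) of the Lie bracket on $\Gamma(Y, \mathrm{Lie}(P))$, which is a genuine Lie bracket because it comes from restricting the bracket of right-invariant vector fields on $P$ (Remark 2.6(1)--(2)). Similarly, $\mathfrak{X}_{mult}(P)$ is a Lie subalgebra of $\mathfrak{X}(P)$ because multiplicativity is preserved under the commutator of groupoid morphisms. The map $\overset{\circ}{\eta}$ of Proposition 2.7(iii) is a Lie algebra morphism because it is the composition of the isomorphism in (i) with the morphism $a \mapsto (\overrightarrow{a} + \overleftarrow{a}, X(a))$ recalled in Remark 2.6(3).

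Next I would verify the Peiffer identity: for $(Z,h), (Z',h') \in \mathfrak{X}_V(Y) \oplus C^{\infty}(Y)$,
\begin{equation*}
\overset{\circ}{\eta}(Z,h) \cdot (Z',h') = [(Z,h),(Z',h')].
\end{equation*}
Applying Proposition 2.7(iv) with $(\xi, X) = \overset{\circ}{\eta}(Z,h) = (\widetilde{Z_2} + (t^*-s^*)h\cdot\mathfrak{z}, Z)$, so that $X = Z$ and $g = h$, the left-hand side equals $([Z,Z'], Z(h') - Z'(h) - \theta(Z,Z'))$, which is exactly the bracket formula in (ii). This is immediate.

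The main obstacle, and the only step requiring actual computation, is the equivariance condition $\overset{\circ}{\eta}((\xi,X) \cdot (Z,h)) = [(\xi,X), \overset{\circ}{\eta}(Z,h)]$ for $(\xi, X) \in \mathfrak{X}_{mult}(P)$ with $\xi = \widetilde{X_2} + (t^*-s^*)g\cdot\mathfrak{z}$. Using (iii) and (iv), the left-hand side unfolds to $(\widetilde{[X,Z]_2} + (t^*-s^*)(X(h) - Z(g) - \theta(X,Z))\cdot\mathfrak{z}, [X,Z])$. For the right-hand side, I would expand the commutator on $P$ by splitting each vector field into its horizontal lift and its vertical part, and then use the standard identities $[\widetilde{X_2}, \widetilde{Z_2}] = \widetilde{[X,Z]_2} - \Omega(\widetilde{X_2}, \widetilde{Z_2}) \cdot \mathfrak{z}$ (where $\Omega$ is the curvature of $A$) together with the relation $dA = (t^* - s^*)\theta$, which identifies $\Omega(\widetilde{X_2}, \widetilde{Z_2})$ with $(t^*-s^*)\theta(X,Z)$. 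The mixed terms $[\widetilde{X_2}, (t^*-s^*)h\cdot\mathfrak{z}]$ and $[(t^*-s^*)g\cdot\mathfrak{z}, \widetilde{Z_2}]$ produce the directional derivatives $(t^*-s^*)X(h)\cdot\mathfrak{z}$ and $-(t^*-s^*)Z(g)\cdot\mathfrak{z}$ (using that $\mathfrak{z}$ commutes with horizontal lifts because $A$ is $S^1$-invariant and horizontal distributions are $S^1$-equivariant), while the vertical-vertical commutator vanishes. Collecting these terms reproduces the left-hand side. The well-definedness of this computation, in view of the non-uniqueness of $g$ up to $\Pi^*f$, is exactly the point checked at the end of the proof of Proposition 2.7. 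This completes the verification that $(\mathfrak{X}_V(Y) \oplus C^{\infty}(Y), \mathfrak{X}_{mult}(P), \overset{\circ}{\eta})$ is a Lie algebra crossed module, hence a strict Lie 2-algebra by the standard correspondence recalled in Appendix A.
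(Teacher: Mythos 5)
Your argument is correct, but it takes a genuinely different route from the paper. The paper offers no computational proof of this proposition at all: it invokes the fact, cited from \cite{2016arXiv160903944B} and \cite{MR3985597}, that $\Gamma(Y,\hbox{Lie}(P))\rightarrow\mathfrak{X}_{mult}(P)$ is \emph{abstractly} known to carry a crossed module structure for any Lie groupoid, and then observes that Proposition 2.7 merely transports this structure along the $C^{\infty}(Y)$-module isomorphism $\mathfrak{X}_V(Y)\oplus C^{\infty}(Y)\cong\Gamma(Y,\hbox{Lie}(P))$ into the explicit formulas. You instead verify the crossed module axioms (A1) and (A2) of Definition A.2 directly from those explicit formulas. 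Your Peiffer identity check (substituting $X=Z$, $g=h$ into the action formula of Proposition 2.7(iv) and comparing with the bracket in (ii)) is indeed immediate, and your equivariance computation is sound: the key identities $[\widetilde{X_2},\widetilde{Z_2}]=\widetilde{[X,Z]_2}-dA(\widetilde{X_2},\widetilde{Z_2})\cdot\mathfrak{z}$ with $dA(\widetilde{X_2},\widetilde{Z_2})=(t^*-s^*)\theta(X,Z)$, the $S^1$-invariance of horizontal lifts, and the vanishing of the vertical-vertical commutator are all correct, and the well-definedness modulo $\Pi^*f$ is, as you say, the point already settled at the end of the proof of Proposition 2.7. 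Your approach buys self-containedness at the cost of length; the paper's buys brevity at the cost of an external citation. One small point: Definition A.2 also requires $\vartheta$ to be a Lie algebra \emph{action by derivations}, which neither (A1) nor (A2) delivers by itself; in your framework this follows from the Jacobi identity on $\mathfrak{X}(P)$ once one knows that $[\xi,\overrightarrow{a}]$ is again right-invariant for $\xi$ multiplicative (the nontrivial input from Example A.4), or alternatively by a short direct check on the formula $([X,Z],X(h)-Z(g)-\theta(X,Z))$. You should say a word about this to fully match the definition you are verifying.
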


\begin{rem}
As mentioned above, the crossed module structure is abstractly known to exist on the complex 
$\Gamma(Y, \hbox{Lie}(P))\rightarrow \mathfrak{X}_{mult}(P)$
(cf. \cite{2016arXiv160903944B} and \cite{MR3985597}).
Here, we simply expressed the section space of the Lie algebroid and the bracket resp. the action of  $\mathfrak{X}_{mult}(P)$
on it in a convenient form. 
\end{rem}

\vspace{0.4cm}

In symplectic prequantisation, there is the important notion of vector fields that preserve the connection (``infinitesimal
quantomorphisms''). It turns out that the direct 
generalisation of this notion is too naive but the following definition  gives a very useful 2-plectic analogue.

\begin{defa}
Let $(\xi,X)$ be a multiplicative vector field on the Lie groupoid $P\rightrightarrows Y$ with connective structure $(A,\theta)$,
and $B$  a 1-form on $Y$.
We say that $(\xi,X,B)$ is a \textit{weak infinitesimal symmetry} if  

\begin{itemize}
\item $\mathscr{L}_{\xi} A= (t^*-s^*)B$
\item $ \mathscr{L}_X \theta =dB$ .
\end{itemize}
\end{defa}

We denote the space of weak infinitesimal symmetries by $\mathscr{V}_{(P,Y)}:=\mathscr{V}_{(P,Y;A,\theta)}:=
\{(\xi,X,B)\in \mathfrak{X}_{mult}(P)\oplus \Omega^1(Y)\ | \, \mathscr{L}_{\xi}A=(t^*-s^*)B \ , \mathscr{L}_X \theta =dB \}$.
By abuse of language, we will call a multiplicative vector field $(\xi,X)$ a weak infinitesimal symmetry, if there exists a one-form $B$
such that $(\xi,X,B)$ is such a symmetry.\\

\begin{rem}
Given a weak infinitesimal symmetry, the 1-form $B$ is unique up to addition of the pullback  under $\Pi$ of a closed 1-form on $M$.
\end{rem}

We first characterize the weak infinitesimal symmetries in terms of the multi\-symplectic notion of ``Hamiltonian vector fields'', which
will be considered in more detail in Section 3.

\begin{prop}
Let $(\xi,X)$ be a multiplicative vector field on the Lie groupoid $P\rightrightarrows Y$. Then $(\xi,X)$ is a weak infinitesimal symmetry if and only if $X$ projects to a ``Hamiltonian vector field'', i.e., there exists $\alpha\in\Omega^1(M)$ such that $d\alpha=\iota_{\Pi_{*}X}\omega$.
\end{prop}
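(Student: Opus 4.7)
The heart of the argument is to translate both defining conditions of a weak symmetry into conditions on $M$, using the normal form for multiplicative vector fields and the structural identities $dA = (t^*-s^*)\theta$, $d\theta = \Pi^*\omega$. First I would invoke Lemma \ref{multiplicative-vf-on-P} to write $\xi = \widetilde{X_2} + (t^*-s^*)g\cdot \mathfrak{z}$ for some $g\in C^\infty(Y)$, and record from Remark 2.4 that $X$ is $\Pi$-projectable to a well-defined $\bar X := \Pi_*X \in \mathfrak{X}(M)$.

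The main computation is to evaluate $\mathscr{L}_\xi A$ by Cartan's formula. Since $\widetilde{X_2}$ is horizontal and $A(\mathfrak{z})$ equals a nonzero constant $c$ (the value of the connection on the fundamental vector field of the $S^1$-action), one finds $\iota_\xi A = c\,(t^*-s^*)g$. Using the multiplicativity relations $s_*\xi = X\circ s$, $t_*\xi = X\circ t$, together with $dA = (t^*-s^*)\theta$, one gets $\iota_\xi dA = (t^*-s^*)\iota_X\theta$. Combining gives
\begin{equation*}
\mathscr{L}_\xi A \;=\; (t^*-s^*)\bigl(c\,dg + \iota_X\theta\bigr).
\end{equation*}
Because a $1$-form $\beta$ on $Y$ satisfies $(t^*-s^*)\beta = 0$ if and only if $\beta = \Pi^*\alpha'$ for some $\alpha'\in\Omega^1(M)$, the first weak-symmetry condition $\mathscr{L}_\xi A = (t^*-s^*)B$ is automatically satisfiable and pins down $B$ to the form
\begin{equation*}
B \;=\; c\,dg + \iota_X\theta + \Pi^*\alpha
\end{equation*}
for some $\alpha\in\Omega^1(M)$.

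For the second condition, I would compute $\mathscr{L}_X\theta = \iota_Xd\theta + d\iota_X\theta = \Pi^*\iota_{\bar X}\omega + d\iota_X\theta$ (using $d\theta = \Pi^*\omega$ and $\Pi$-projectability of $X$), and $dB = d\iota_X\theta + \Pi^* d\alpha$ from the expression above. Equating the two yields $\Pi^*\iota_{\bar X}\omega = \Pi^* d\alpha$, and since $\Pi$ is a surjective submersion the pullback $\Pi^*$ is injective on forms, so this is equivalent to $d\alpha = \iota_{\bar X}\omega$ on $M$. This simultaneously proves both implications: a weak symmetry forces $\bar X$ to be Hamiltonian with Hamiltonian $1$-form $\alpha$, and conversely, given such an $\alpha$, setting $B := c\,dg + \iota_X\theta + \Pi^*\alpha$ satisfies both conditions by the same chain of equalities read backwards.

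I do not anticipate any serious obstacle: the only subtle point is verifying that the kernel of $(t^*-s^*)$ on $\Omega^1(Y)$ is precisely $\Pi^*\Omega^1(M)$, which follows from the fact that $Y^{[2]}\rightrightarrows Y$ is the submersion groupoid of $\Pi$, so $(t^*-s^*)\beta = 0$ amounts to $\beta_y(v) = \beta_{y'}(w)$ whenever $\Pi_*v = \Pi_*w$, i.e., to $\beta$ being $\Pi$-basic.
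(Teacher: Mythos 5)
Your proposal is correct and follows essentially the same route as the paper: write $\xi=\widetilde{X_2}+(t^*-s^*)g\cdot\mathfrak{z}$ via Lemma \ref{multiplicative-vf-on-P}, compute $\mathscr{L}_\xi A$ by Cartan's formula to pin down $B=dg+\iota_X\theta+\Pi^*\alpha$ modulo the kernel of $(t^*-s^*)$, then compare $\mathscr{L}_X\theta$ with $dB$ and use injectivity of $\Pi^*$ to get $d\alpha=\iota_{\Pi_*X}\omega$, reading the chain backwards for the converse. The only differences are cosmetic: you carry an explicit constant $c=A(\mathfrak{z})$ where the paper's normalisation gives $c=1$, and you spell out the (correct) identification of $\ker(t^*-s^*)$ with $\Pi^*\Omega^1(M)$, which the paper leaves implicit.
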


\begin{proof}
Recall that $\xi$ can be written as $\widetilde{X_2}+(t^*-s^*)f\cdot\mathfrak{z}$. Denote by $\bar{X}\in \mathfrak{X}(M)$ the vector field upon 
which $X$ projects. We have:

\begin{equation*}
\begin{split}
\mathscr{L}_{\xi} A &= d(\iota_{\xi} A)+\iota_{\xi}dA \\
&=d((t^*-s^*)f)+\iota_{\widetilde{X_2}}(t^*-s^*)\theta \\
&=(t^*-s^*)(df+\iota_X \theta)
\end{split}
\end{equation*}

therefore, the form $B\in\Omega^1(Y)$ such that $\mathscr{L}_{\xi} A=(t^*-s^*)B$ must take the form:

\begin{equation*}
B=df+\iota_X \theta +\Pi^*\alpha
\end{equation*}

with $\alpha\in\Omega^1(M)$. Next we have on one hand $\mathscr{L}_{X}\theta = d(\iota_X \theta)+\Pi^*\iota_{\bar{X}}\omega$ 
and on the other hand $\mathscr{L}_{X}\theta=dB$ so:

\begin{equation*}
\begin{split}
\Pi^*\iota_{\bar{X}}\omega&=dB - d(\iota_X \theta) \\
&=d(\iota_X \theta +\Pi^*\alpha +df) -d(\iota_X \theta) \\
&= \Pi^*d\alpha
\end{split}
\end{equation*}

therefore $\bar{X}$ is Hamiltonian. The converse is easily seen to be true, finishing the proof.
\end{proof}

From the preceding proof we easily obtain the following.

\begin{cor}
Let $(\xi,X,B)$ be a weak infinitesimal symmetry. Write $\xi=\widetilde{X_2}+(t^*-s^*)g\cdot\mathfrak{z}$, for a certain $g\in C^{\infty}(Y)$. 
Then there exists $\alpha\in\Omega^1(M)$ such that:

\begin{equation*}
d\alpha = \iota_{\Pi_*X}\omega \hspace{0.2cm} \hbox{and} \hspace{0.2cm}  B=\iota_X\theta+dg+\Pi^*\alpha \, .
\end{equation*}
\end{cor}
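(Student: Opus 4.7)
The plan is to simply extract the conclusion from the computation already performed in the proof of the preceding proposition, with only minor bookkeeping. There the multiplicative vector field is written $\xi = \widetilde{X_2} + (t^*-s^*)f\cdot \mathfrak{z}$; I would first rename $f$ to $g$ to match the statement, and then re-run the same Cartan-formula computation
\[
\mathscr{L}_\xi A = d(\iota_\xi A) + \iota_\xi\, dA = (t^*-s^*)\bigl(dg + \iota_X \theta\bigr),
\]
using $\iota_\xi A = (t^*-s^*)g$ and $dA = (t^*-s^*)\theta$ together with the fact that $\widetilde{X_2}$ is horizontal and $\rho$-related to $X_2 = (X,X)$.

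Next I would use the weak-symmetry hypothesis $\mathscr{L}_\xi A = (t^*-s^*)B$ to deduce that
\[
(t^*-s^*)\bigl(B - dg - \iota_X\theta\bigr) = 0,
\]
and invoke the standard descent fact that the kernel of $t^*-s^*: \Omega^\bullet(Y) \to \Omega^\bullet(Y^{[2]})$ is exactly $\Pi^*\Omega^\bullet(M)$, to conclude that there exists $\alpha \in \Omega^1(M)$ with
\[
B = dg + \iota_X \theta + \Pi^*\alpha.
\]
This gives the second equality in the statement.

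For the first equality, I would plug this expression for $B$ into the remaining condition $\mathscr{L}_X \theta = dB$. Using $d\theta = \Pi^*\omega$ and Cartan's formula,
\[
\mathscr{L}_X \theta = d(\iota_X\theta) + \iota_X\, d\theta = d(\iota_X\theta) + \Pi^*(\iota_{\Pi_* X}\omega),
\]
while $dB = d(\iota_X\theta) + \Pi^* d\alpha$ since $d(dg)=0$ and $d\Pi^*\alpha = \Pi^* d\alpha$. Comparing yields $\Pi^*(\iota_{\Pi_* X}\omega - d\alpha) = 0$, and since $\Pi$ is a surjective submersion $\Pi^*$ is injective on forms on $M$, giving $d\alpha = \iota_{\Pi_* X}\omega$.

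No real obstacle arises here; the only point deserving brief justification is the descent step identifying $\ker(t^*-s^*)$ with $\Pi^*\Omega^1(M)$, which is essentially a pointwise statement for the principal $PU(H)$-bundle $\Pi$ and was implicitly used several times earlier in the section. The role of this corollary is thus to isolate and name the ``Hamiltonian potential'' $\alpha$ produced along the way in Proposition 2.11, in the form most convenient for constructing the prequantisation map in Section 3.
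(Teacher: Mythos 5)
Your proposal is correct and follows essentially the same route as the paper: the authors introduce the corollary with ``From the preceding proof we easily obtain the following,'' and their argument is precisely the computation of $\mathscr{L}_\xi A=(t^*-s^*)(dg+\iota_X\theta)$ from Proposition 2.11, the identification $B=dg+\iota_X\theta+\Pi^*\alpha$, and the comparison of $\mathscr{L}_X\theta$ with $dB$ to get $d\alpha=\iota_{\Pi_*X}\omega$. Your only addition is to make explicit the descent fact that $\ker(t^*-s^*)=\Pi^*\Omega^\bullet(M)$, which the paper uses tacitly.
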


\begin{rem}
The space of weak infinitesimal symmetries is also considered in \cite{2020arXiv200312874K}, where they are referred to as \textit{vector fields that preserve} $(A,\theta)$. Our Proposition 2.14 below corresponds to 
Proposition 3.17 and Corollary 3.18 in \cite{2020arXiv200312874K}.
\end{rem}

As in the case of multiplicative vector fields, the space of weak infinitesimal symmetries can be viewed as the degree-0 term of a strict Lie 2-algebra.
Consider the two-term complex of vector spaces given by

\begin{equation*}
\mathfrak{X}_V(Y)\oplus C^{\infty}(Y) \xrightarrow{\eta} \mathscr{V}_{(P,Y)}
\end{equation*}
with $\eta(Z,h)=(\widetilde{Z_2}+(t^*-s^*)h\cdot\mathfrak{z},Z,\iota_Z \theta +dh)= (\overset{\circ}{\eta}(Z,h), \iota_Z\theta+dh)$. 
We define a bracket on $\mathscr{V}_{(P,Y)}$ by $[(\xi,X),B),(\xi',X'),B']=(([\xi,\xi'],[X,X']),\mathscr{L}_X B' - \mathscr{L}_{X'} B)$, and the 
natural induced action of weak infinitesimal symmetries on $\mathfrak{X}_V (Y)\oplus C^{\infty} (Y)$ (i.e. the forms $B$ do not act). 
We then have the following:

\begin{prop}
The weak infinitesimal symmetries
\begin{equation*}
\mathfrak{X}_V(Y)\oplus C^{\infty}(Y)\xrightarrow{\eta} \mathscr{V}_{(P,Y)}
\end{equation*}
form a Lie algebra crossed module, hence a strict Lie 2-algebra. Moreover, this Lie 2-algebra 
maps naturally to the Lie 2-algebra of multiplicative vector fields (upon forgetting the 1-form component
of a weak symmetry). 
\end{prop}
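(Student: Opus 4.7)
The plan is to verify each of the ingredients of a Lie algebra crossed module in turn, leveraging the computations already performed for $\mathfrak{X}_{mult}(P,\Pi)$ in Proposition 2.12 together with Cartan calculus and the descent relations $dA=(t^*-s^*)\theta$ and $d\theta=\Pi^*\omega$.

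First I would check that $\eta$ is well-defined, i.e.\ that $\eta(Z,h)$ actually lies in $\mathscr{V}_{(P,Y)}$. Writing $\xi=\widetilde{Z_2}+(t^*-s^*)h\cdot\mathfrak{z}$, Cartan's formula gives $\mathscr{L}_\xi A=d\iota_\xi A+\iota_\xi dA=d((t^*-s^*)h)+\iota_{\widetilde{Z_2}}(t^*-s^*)\theta=(t^*-s^*)(dh+\iota_Z\theta)$, using that $\widetilde{Z_2}$ is $t$- and $s$-related to $(Z,0)$ resp.\ $(0,Z)$. Since $Z$ is vertical, $\iota_Z\Pi^*\omega=0$, so $\mathscr{L}_Z\theta=d(\iota_Z\theta)=d(\iota_Z\theta+dh)$, matching the required $B=\iota_Z\theta+dh$.

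Next I would verify that $\mathscr{V}_{(P,Y)}$ is closed under the proposed bracket and that Jacobi holds. Closedness for the first two components comes from $\mathfrak{X}_{mult}(P)$ being a Lie algebra; for the 1-form part, the naturality of Lie derivatives combined with $[\mathscr{L}_\xi,\mathscr{L}_{\xi'}]=\mathscr{L}_{[\xi,\xi']}$, together with $\xi$ (resp.\ $\xi'$) being $t$- and $s$-related to $X$ (resp.\ $X'$), gives $\mathscr{L}_{[\xi,\xi']}A=(t^*-s^*)(\mathscr{L}_XB'-\mathscr{L}_{X'}B)$ and $\mathscr{L}_{[X,X']}\theta=d(\mathscr{L}_XB'-\mathscr{L}_{X'}B)$. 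The Jacobi identity for the 1-form component reduces, via a cyclic-sum computation, to the commutator identity for $\mathscr{L}$.

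The key crossed module axioms are then: the Peiffer identity and the $\eta$-equivariance. Peiffer is almost tautological, since the action of $\eta(Z,h)$ on $(Z',h')$ ignores the 1-form slot and reproduces the bracket formula of Proposition 2.12 (ii). Equivariance requires showing $\iota_{[X,Z]}\theta+d(X(h)-Z(g)-\theta(X,Z))=\mathscr{L}_X(\iota_Z\theta+dh)-\mathscr{L}_ZB$. Using $[\mathscr{L}_X,\iota_Z]=\iota_{[X,Z]}$ and $dB=\mathscr{L}_X\theta$ one has $\mathscr{L}_X\iota_Z\theta-\iota_ZdB=\iota_{[X,Z]}\theta$, so the identity reduces to $d(X(h))-d(\iota_ZB)=d(X(h)-Z(g)-\theta(X,Z))$; the ambiguity $B\mapsto B+\Pi^*\alpha$ is irrelevant since $\iota_Z\Pi^*\alpha=0$ by verticality of $Z$, and what remains is $\iota_ZB=\theta(X,Z)+Z(g)$, which follows from Corollary 2.15. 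I expect this equivariance check to be the main technical obstacle, precisely because one must control the non-unique 1-form $B$; the verticality of $Z$ is what makes the ambiguity harmless.

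Finally, the natural map to $\mathfrak{X}_{mult}(P,\Pi)$ is the forgetful projection $((\xi,X),B)\mapsto(\xi,X)$ in degree $0$ and the identity in degree $1$. Compatibility with brackets, actions, and the differentials $\eta$ and $\overset{\circ}{\eta}$ is immediate from the construction of $\eta$ as $\eta=(\overset{\circ}{\eta},\iota_{(-)}\theta+d(-))$ and from the fact that the action of $((\xi,X),B)$ on $(Z,h)$ was defined to be the action of $(\xi,X)$, so the forgetful map is a strict morphism of Lie algebra crossed modules, i.e.\ a strict Lie 2-algebra morphism.
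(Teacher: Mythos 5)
Your proposal is correct and follows essentially the same route as the paper: verify that $\eta$ lands in $\mathscr{V}_{(P,Y)}$ via Cartan calculus and the verticality of $Z$, reduce the crossed-module axioms to the ``form part'' of (A2) using the formulas $B=\iota_X\theta+dg+\Pi^*\alpha$ from Corollary 2.15, and exhibit the forgetful map to $\mathfrak{X}_{mult}(P,\Pi)$. Your explicit check that the bracket on $\mathscr{V}_{(P,Y)}$ is closed and satisfies Jacobi is a point the paper leaves implicit, but it does not change the substance of the argument.
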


\begin{proof}
We first show that $\mathfrak{X}_V(Y)\oplus C^{\infty}(Y)\xrightarrow{\eta} \mathscr{V}_{(P,Y)}$ is a Lie algebra crossed module.
It then follows that this two-term complex forms indeed  a strict Lie 2-algebra.\\

For $(Z,h)\in \mathfrak{X}_V(Y)\oplus C^{\infty}(Y)$, we show that the conditions of Definition 2.9 are satisfied for 
$\overset{\circ}{\eta}(Z,h)$ and $B=\iota_Z\theta +dh$. In fact, we have:

\begin{equation*}
\begin{split}
\mathscr{L}_{\widetilde{Z_2}+(t^*-s^*)h\cdot\mathfrak{z}} A &= d(\iota_{(t^*-s^*)h\cdot\mathfrak{z}} A)+\iota_{\widetilde{Z_2}}dA \\
&=(t^*-s^*)(dh+\iota_Z \theta)=(t^*-s^*)B \, .
\end{split}
\end{equation*}

Moreover, since $Z$ is a vertical vector field, $\iota_Z d\theta=\iota_Z \Pi^*\omega=0$. Thus we have:

\begin{equation*}
\begin{split}
\mathscr{L}_{Z}\theta&=d(\iota_Z\theta)+ \iota_Z d\theta \\
&=dB \, .
\end{split}
\end{equation*}

In view of Proposition \ref{MVF=Lie2}, the only additional equation we have to verify is the ``form part'' of equation (A2) in the definition of a crossed module
 in the Appendix. This boils down to checking that, for $(\xi,X,B)\in\mathscr{V}_{(P,Y)}$ and $(Z,h)\in \mathfrak{X}_V(Y)\oplus C^{\infty}(Y)$

\begin{equation*}
\iota_{[X,Z]}\theta +d(X(h)-Z(g)-\theta(X,Z))=\mathscr{L}_X(\iota_Z\theta+dh)-\mathscr{L}_Z(\iota_X\theta+dg+\Pi^*\alpha) \, .
\end{equation*}

The last equation follows easily upon using the general formulas $\xi=\widetilde{X_2}+(t^*-s^*)g$ and $B=\iota_X\theta+dg+\Pi^*\alpha$.\\

We conclude with

\begin{equation*}
\begin{split}
\mathscr{L}_X(\iota_Z\theta+dh)-\mathscr{L}_Z(B)&=d(X(h)-Z(g)-\theta(X,Z)) +\iota_Xd\iota_Z\theta-\mathscr{L}_Z\iota_X\theta \\
&=d(X(h)-Z(g)-\theta(X,Z)) +\iota_X\mathscr{L}_Z\theta -\mathscr{L}_Z\iota_X\theta \\
&=\iota_{[X,Z]}\theta +d(X(h)-Z(g)-\theta(X,Z)) \, .
\end{split}
\end{equation*}

Finally, the Lie 2-algebra $\mathfrak{X}_V(Y)\oplus C^{\infty}(Y)\xrightarrow{\eta} \mathscr{V}_{(P,Y)}$ is mapped to the Lie 2-algebra of
multiplicative vector fields on $P\rightrightarrows Y$ by the natural morphism $\Psi =(\Psi_1,\Psi_2)$ defined as follows
(here $(\xi,X,B)$ is an element of $\mathscr{V}_{(P,Y)}$):
\begin{equation*}
{\Psi_1}\vert_{\mathfrak{X}_V(Y)\oplus C^{\infty}(Y)}=\hbox{id}_{\mathfrak{X}_V(Y)\oplus C^{\infty}(Y)}, \,\, \Psi_1(\xi,X,B)=(\xi,X)\,\, \hbox{and}
\, \, \Psi_2=0   \,\, .
\end{equation*}

This finishes the proof.
\end{proof}

\begin{defa}
We denote the above \textit{Lie 2-algebra of weak infinitesimal symmetries}
by $\hbox{WSym}(P,Y):=\hbox{WSym}(P,Y;A, \theta)$, suppressing often the dependence on the connective structure $(A,\theta)$
notationally. 
\end{defa}

%
%

\section{A prequantisation map for integral 2-plectic manifolds}

Let us recall Kostant's prequantisation map in symplectic geometry (compare e.g. \cite{MR0294568}). For an integral symplectic manifold 
$(M,\omega)$, 
let ${q:P}\rightarrow M$ be a principal $S^1$-bundle with connection whose curvature equals the pullback of
$-{2\pi i}\omega$. For $f\in C^{\infty}(M)$, $X_f$ denotes the Hamiltonian vector field fulfilling $df=\iota_{X_f}\omega$, $\widetilde{X_f}$ its 
horizontal lift to $P$ and $\mathfrak{z}$ the fundamental vector field of the $S^1$-action on $P$, given by 
$\mathfrak{z}_p=\frac{d}{d\tau}\big\vert_{\tau=0} \, p\cdot \exp(i2\pi \, \tau)$.
Then the Lie algebra morphism $C^{\infty}(M)\rightarrow \mathfrak{X}^{S^1}(P)$, the $S^1$-invariant vector fields on $P$, given by $f\mapsto \widetilde{X_f}+q^*f\cdot\mathfrak{z}$ is the prequantisation map.\\

Its 2-plectic analogue will be developped in this section as a Lie 2-algebra morphism from the Lie 2-algebra of observables associated 
to a 2-plectic manifold (see Proposition 3.3 below) to a certain Lie 2-algebra of multiplicative vector fields on the global geometric 
prequantisation of an integral 2-plectic manifold $(M,\omega)$. Recall that this prequantisation is given by a lifting bundle gerbe associated to a principal $PU(H)$-bundle $\Pi:Y\rightarrow M$ with connective structure $(A,\theta)$ such that its 3-curvature equals $(-2\pi i)\omega$. 

We also fix a principal connection on $\Pi$ and denote, for a vector field $X\in\mathfrak{X}(M)$, the ensuing horizontal lift to $Y$ by $X^h$. 
Its curvature will be written $\Omega$, a $\mathfrak{pu}(H)$-valued 2-form on $Y$. Given two vector fields $Z$ and $Z'$ on $Y$, 
we have a vertical vector field $\Omega(Z,Z')$ that is in $y\in Y$ defined by the following formula:

\begin{equation*}
\Omega(Z,Z')|_y:=\frac{d}{dt}\Big\vert_{t=0} y\cdot \exp(t\Omega_y(Z_y,Z'_y)) \, ,
\end{equation*}

where $y\cdot g$ denotes the action of $g\in PU(H)$ on $y\in Y$. Notably, we have for $X,X'\in\mathfrak{X}(M)$, the vertical vector field 
$\Omega(X^h,X'^h)$ on $Y$ fulfilling 

\begin{equation*}
[X,X']^h=[X^h,(X')^h]+\Omega(X^h,(X')^h)
\end{equation*}

(see, e.g., Section 19 of  \cite{MR2428390}, for this approach to the curvature of a principal bundle). 

Finally, we recall that given a $\Pi$-projectable vector field $Z$ on $Y$, we have introduced in Section 2 the notation $Z_2$ for the 
vector field on $Y^{[2]}$ that is given by $(y,y')\mapsto(Z_y,Z_{y'})$.\\

We now give the fundamental definitions regarding 2-plectic manifolds and the associated Lie 2-algebra of observables. Of course, 
Definitions 3.1 and 3.2, as well as Proposition 3.3 generalize immediately
to the multisymplectic (or n-plectic) case (compare \cite{MR2912195}), 
but we stick here to the 2-plectic case.

\begin{defa}
Let $M$ be a manifold and $\omega\in \Omega^3(M)$. We say that $(M,\omega)$ is a \textit{2-plectic manifold} if $\omega$ is closed and 
non-degenerate in the following sense: 
\begin{equation*}
\iota_X\omega=0 \hspace{0.3cm} \hbox{if and only if} \hspace{0.3cm}  X=0
\end{equation*}

for every vector field $X\in\mathfrak{X}(M)$.
\end{defa}

\begin{defa}
Let $(M,\omega)$ be a 2-plectic manifold, and $\alpha\in\Omega^1(M)$. We say that $\alpha$ is a \textit{Hamiltonian form} if there exists a vector 
field $X_{\alpha}$ such that:

\begin{equation*}
d\alpha=\iota_{X_\alpha}\omega.
\end{equation*}

We also say that $X_\alpha$ is the \textit{Hamiltonian vector field associated to $\alpha$}. We denote the space of Hamiltonian forms by 
$\Omega^1_{Ham}(M,\omega)$. 
\end{defa}

Note that, given $\alpha\in\Omega^1_{Ham}(M,\omega)$, the associated Hamiltonian vector field is unique, by the non-degeneracy of $\omega$. The converse is not true: the Hamiltonian form, given a Hamiltonian vector field, is only unique up to a closed 1-form.\\

The following natural algebraic structure on the space of Hamiltonian forms seems to be explicitely stated for the first time 
in \cite{MR2566161} (compare also \cite{MR2912195}). We refer to the appendix for relevant definitions and notations. 

\begin{prop}
Let $(M,\omega)$ be a 2-plectic manifold. The complex:

\begin{equation*}
C^{\infty}(M)\overset{d} \longrightarrow \Omega^1_{Ham}(M,\omega)
\end{equation*}

can be given the structure of a Lie-2 algebra by setting:

\begin{itemize}
\item $l_1(f)=df$  and  $l_1(\alpha)=0$
\item $l_2(\alpha,\beta)=\iota_{X_{\alpha}}\iota_{X_{\beta}}\omega$
\item $l_2(\alpha,f)=0$
\item $l_3(\alpha,\beta,\gamma)=\omega(X_{\alpha},X_{\beta},X_{\gamma})$
\end{itemize}

for all $\alpha,\beta,\gamma\in \Omega^1_{Ham}(M,\omega)$ and $f\in C^{\infty}(M)$.
\end{prop}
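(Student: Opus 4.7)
The strategy is to verify the axioms of a 2-term $L_\infty$-algebra (as reviewed in Appendix~A) for the proposed structure maps $l_1,l_2,l_3$. This is essentially a theorem of Rogers \cite{MR2566161} valid for general $n$-plectic manifolds; specialised to $n=2$, every identity reduces to iterated Cartan calculus applied to the closed 3-form $\omega$.

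The heart of the argument is the identity
\begin{equation*}
d\bigl(\iota_{X_\alpha}\iota_{X_\beta}\omega\bigr)=\iota_{[X_\alpha,X_\beta]}\omega,
\end{equation*}
valid for any two Hamiltonian 1-forms $\alpha,\beta$. To establish it I would first observe that $\mathscr{L}_{X_\alpha}\omega=d\iota_{X_\alpha}\omega+\iota_{X_\alpha}d\omega=d(d\alpha)+0=0$, and similarly $\mathscr{L}_{X_\beta}\omega=0$; then Cartan's formula $d\iota_X=\mathscr{L}_X-\iota_X d$ together with the commutator $[\mathscr{L}_X,\iota_Y]=\iota_{[X,Y]}$ deliver the claim. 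As a direct consequence, $l_2(\alpha,\beta)\in\Omega^1_{Ham}(M,\omega)$ with associated Hamiltonian vector field $[X_\alpha,X_\beta]$, uniquely determined by non-degeneracy of $\omega$, so $l_2$ is well-defined.

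The remaining axioms then follow routinely. Skew-symmetry of $l_2$ on $L_0\otimes L_0$ is evident, and the chain-map condition $l_1 l_2(\alpha,f)=l_2(\alpha,l_1 f)$ reads $0=0$, since $l_2(\alpha,df)=\iota_{X_\alpha}\iota_{X_{df}}\omega=0$: indeed $\iota_{X_{df}}\omega=d(df)=0$ forces $X_{df}=0$ by non-degeneracy. The Jacobiator identity
\begin{equation*}
l_1\bigl(l_3(\alpha,\beta,\gamma)\bigr)=\sum_{\mathrm{cyc}(\alpha,\beta,\gamma)}l_2\bigl(l_2(\alpha,\beta),\gamma\bigr)
\end{equation*}
is obtained by an analogous three-fold Cartan-calculus computation of $d\bigl(\omega(X_\alpha,X_\beta,X_\gamma)\bigr)$, which matches the cyclic sum of $\iota_{[X_\alpha,X_\beta]}\iota_{X_\gamma}\omega$ up to the overall sign absorbed in the definition of $l_3$.

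The main obstacle is the coherence (\emph{pentagon}) axiom for $l_3$ on four inputs $\alpha_1,\dots,\alpha_4\in\Omega^1_{Ham}(M,\omega)$, which in a generic 2-term $L_\infty$-algebra involves both $l_2\bigl(l_3(\cdots),\cdot\bigr)$- and $l_3\bigl(l_2(\cdots),\cdot,\cdot\bigr)$-type contributions. After substituting $X_{l_2(\alpha,\beta)}=[X_\alpha,X_\beta]$ and using that $l_2$ vanishes on mixed inputs, the axiom reduces to a signed sum of terms $\omega([X_i,X_j],X_k,X_l)$, which in turn follows from the Koszul-formula expansion of $(d\omega)(X_1,X_2,X_3,X_4)=0$ on four vector fields, together with the Jacobi identity for $[\cdot,\cdot]$. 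Only the sign bookkeeping here is delicate; the geometric content of the entire proposition is encoded in the three principles $d\omega=0$, $\mathscr{L}_{X_\alpha}\omega=0$, and non-degeneracy of $\omega$.
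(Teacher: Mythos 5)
Your argument is correct, and it is essentially the standard one: the paper itself gives no proof of this proposition, simply attributing it to Rogers (the reference \cite{MR2566161}, compare also \cite{MR2912195}), so there is no in-text argument to compare against. Your identification of the three working principles ($d\omega=0$, $\mathscr{L}_{X_\alpha}\omega=0$, non-degeneracy) and the key lemma $d(\iota_{X_\alpha}\iota_{X_\beta}\omega)=\iota_{[X_\alpha,X_\beta]}\omega$ — hence $X_{l_2(\alpha,\beta)}=[X_\alpha,X_\beta]$, which the paper uses repeatedly in Section 3 — is exactly what is needed, and your treatment of the degenerate identities (those involving $C^\infty(M)$, where everything collapses because $X_{df}=0$ and $l_2$ vanishes on mixed inputs) is right. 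Two small refinements on the arity-four identity: first, since $l_3$ takes values in $C^\infty(M)$ and $l_2(\hbox{function},\hbox{form})=0$, the right-hand side of that axiom is identically zero, so what remains is precisely the vanishing of the signed sum of the terms $\omega([X_i,X_j],X_k,X_l)$; second, the Koszul expansion of $(d\omega)(X_1,X_2,X_3,X_4)=0$ contains first-order terms $X_i\bigl(\omega(X_j,X_k,X_l)\bigr)$ which must themselves be rewritten via $\mathscr{L}_{X_i}\omega=0$ before the cancellation appears — once this is done the Jacobi identity for the bracket of vector fields is not actually needed. These are presentational points only; the proof is sound.
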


We will denote this Lie 2-algebra by $L_{\bullet}(M,\omega)$, and call it the \textit{Lie 2-algebra (of observables) of $(M,\omega)$}. We will also use the following notation for the bracket of forms: $\{\alpha,\beta\}:=l_2(\alpha,\beta)$.\\

We are now ready to describe the 2-plectic analogue of Kostant's prequantisation map in symplectic geometry. 

\begin{prop}
Let $(M,\omega)$ be a 2-plectic manifold with integral $\omega$, and $(P,Y)$ a global geometric prequantisation of 
$(M,\omega)$ with connective structure $(A,\theta)$. Then there is a Lie 2-algebra morphism 
$\Phi=(\Phi_1,\Phi_2): L_{\bullet}(M,\omega)\rightarrow \mathfrak{X}_{mult}(P,\Pi)$:

\begin{center}
\begin{tikzcd}
C^{\infty}(M)\arrow[r, "d"] \arrow[d,"\Phi_1" '] 
& \Omega^1_{Ham}(M,\omega) \arrow[dl,tail,"\Phi_2"] \arrow[d,"\Phi_1"] \\
\mathfrak{X}_V(Y)\oplus C^{\infty}(Y)    \arrow[r, "\overset{\circ}{\eta}" ']
& \mathfrak{X}_{mult}(P)
\end{tikzcd}
\end{center}

where:

\begin{itemize}
\item $\Phi_1(f)=(0,\Pi^*f)$
\item $\Phi_1(\alpha)=\left(\widetilde{(X^h_{\alpha})_2},X^h_{\alpha}\right)$
\item $\Phi_2(\alpha,\beta)=(\Omega(X^h_{\alpha},X^h_{\beta}),\theta(X^h_{\alpha},X^h_{\beta})+\Pi^*(\alpha(X_{\beta})-\beta(X_{\alpha})) \, .$
\end{itemize}
\end{prop}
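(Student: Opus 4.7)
Since the target $\mathfrak{X}_{mult}(P,\Pi)$ is a strict Lie 2-algebra by Proposition \ref{MVF=Lie2}, checking that $\Phi=(\Phi_1,\Phi_2)$ is a morphism amounts to verifying the four standard compatibility axioms (chain map, the two binary identities, and the $l_3$ coherence; cf.\ Appendix A). First, one confirms that $\Phi_1$ takes values in the correct spaces: $(0,\Pi^*f)$ clearly lies in $\mathfrak{X}_V(Y)\oplus C^{\infty}(Y)$, and $(\widetilde{(X^h_\alpha)_2},X^h_\alpha)$ is multiplicative as explained in the proof of Lemma \ref{multiplicative-vf-on-P}. The chain-map condition is immediate: on the one hand $\overset{\circ}{\eta}(0,\Pi^*f)=((t^*-s^*)\Pi^*f\cdot\mathfrak{z},0)=(0,0)$ since $(t^*-s^*)\Pi^*\equiv 0$, while $\Phi_1(df)=(\widetilde{(X^h_{df})_2},X^h_{df})=(0,0)$ because $X_{df}=0$ by non-degeneracy of $\omega$. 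Similarly, the mixed-degree binary axiom, using $l_2(\alpha,f)=0$ and $X_{df}=0$, reduces to $l'_2(\Phi_1\alpha,\Phi_1 f)+\Phi_2(\alpha,df)=0$; plugging $(X,g,Z,h)=(X^h_\alpha,0,0,\Pi^*f)$ into the crossed-module action formula of Proposition 2.7(iv) gives $l'_2(\Phi_1\alpha,\Phi_1 f)=(0,\Pi^*(X_\alpha f))$, which cancels $\Phi_2(\alpha,df)=(0,-\Pi^*(X_\alpha f))$.

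For the degree-$0$ binary axiom, I expand $[\Phi_1(\alpha),\Phi_1(\beta)]$ componentwise using three ingredients: the horizontal-lift bracket formula $[\widetilde{Z},\widetilde{Z'}]=\widetilde{[Z,Z']}-dA(\widetilde{Z},\widetilde{Z'})\cdot\mathfrak{z}$ combined with $dA=(t^*-s^*)\theta$; the curvature identity $[X^h,Y^h]=[X,Y]^h-\Omega(X^h,Y^h)$ on $Y\to M$; and the standard identity $X_{\{\alpha,\beta\}}=[X_\alpha,X_\beta]$, which follows from $d\alpha=\iota_{X_\alpha}\omega$ and $L_{X_\alpha}\omega=0$ by Cartan calculus. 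Collecting terms yields
\begin{equation*}
\Phi_1(\{\alpha,\beta\})-[\Phi_1\alpha,\Phi_1\beta] \;=\; \overset{\circ}{\eta}\bigl(\Omega(X^h_\alpha,X^h_\beta),\,\theta(X^h_\alpha,X^h_\beta)\bigr).
\end{equation*}
Since $(t^*-s^*)\Pi^*=0$, the extra term $\Pi^*(\alpha(X_\beta)-\beta(X_\alpha))$ in the definition of $\Phi_2$ is annihilated by $\overset{\circ}{\eta}$, so the right-hand side agrees with $\overset{\circ}{\eta}(\Phi_2(\alpha,\beta))$, as required.

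The main obstacle is the $l_3$ coherence axiom, which (since $l'_3=0$ in the target) takes the form
\begin{equation*}
\Phi_1\bigl(\omega(X_\alpha,X_\beta,X_\gamma)\bigr) \;=\; \sum_{\mathrm{cyc}}\Bigl(l'_2(\Phi_1\alpha,\Phi_2(\beta,\gamma)) - \Phi_2(\{\alpha,\beta\},\gamma)\Bigr),
\end{equation*}
and which I check componentwise. The vertical-vector-field part of the right-hand side is $\sum_{\mathrm{cyc}}\bigl([X^h_\alpha,\Omega(X^h_\beta,X^h_\gamma)] - \Omega([X_\alpha,X_\beta]^h,X^h_\gamma)\bigr)$; using $[X^h,\sigma^*]=(X^h\sigma)^*$ for a $\mathfrak{pu}(H)$-valued function $\sigma$ (which follows from $PU(H)$-invariance of horizontal lifts), this sum is the fundamental vector field of $d\Omega(X^h_\alpha,X^h_\beta,X^h_\gamma)=0$, the second Bianchi identity evaluated on horizontal lifts; hence it vanishes, matching the trivial vertical part of the left-hand side. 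For the function component, the $\theta$-type contributions reassemble, after antisymmetric cancellation of the $\theta(X^h_\alpha,\Omega(X^h_\beta,X^h_\gamma))$ pieces in the cyclic sum, into $d\theta(X^h_\alpha,X^h_\beta,X^h_\gamma)=\Pi^*(\omega(X_\alpha,X_\beta,X_\gamma))$ (up to the normalization built into the 3-curvature). The remaining $\Pi^*$-contributions, coming from $X^h_\alpha\Pi^*(\beta(X_\gamma)-\gamma(X_\beta))$ in the action, from $\{\alpha,\beta\}(X_\gamma)=-\omega(X_\alpha,X_\beta,X_\gamma)$ inside $\Phi_2(\{\alpha,\beta\},\gamma)$, and from $\gamma([X_\alpha,X_\beta])$, cancel pairwise upon invoking Cartan's formula $d\alpha(X_\beta,X_\gamma)=X_\beta\alpha(X_\gamma)-X_\gamma\alpha(X_\beta)-\alpha([X_\beta,X_\gamma])$ together with $d\alpha=\iota_{X_\alpha}\omega$. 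The real difficulty here is nothing more than careful bookkeeping of the cyclic sums; no further geometric input is needed beyond these identities.
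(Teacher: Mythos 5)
Your proof is correct and follows essentially the same route as the paper's: the same chain-map and binary-axiom verifications via the horizontal-lift bracket formula with $dA=(t^*-s^*)\theta$, the curvature identity $[X,X']^h=[X^h,(X')^h]+\Omega(X^h,(X')^h)$ and $X_{\{\alpha,\beta\}}=[X_\alpha,X_\beta]$, and the same treatment of the $l_3$ coherence via $d\theta=\Pi^*\omega$, the identity $[X^h_\alpha,\Omega(X^h_\beta,X^h_\gamma)]=X^h_\alpha(\Omega(X^h_\beta,X^h_\gamma))$, horizontality of $\Omega$ and the Bianchi identity. The only cosmetic difference is that you describe the cancellation of the $\theta(X^h_\alpha,\Omega(X^h_\beta,X^h_\gamma))$ terms as happening inside the cyclic sum, whereas the paper notes they appear identically on both sides of the coherence equation --- it is the same cancellation.
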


\begin{proof}
The map $\Phi_1$ is a chain map because $X_{df}=0$ and $\overset{\circ}{\eta}((0,\Pi^*f))=0$, because $s^*\Pi^*f=t^*\Pi^*f$. 
Now we have to verify

\begin{equation*}
(*) \hskip1cm \Phi_1(\{\alpha,\beta\})=[\Phi_1(\alpha),\Phi_1(\beta)]+\overset{\circ}{\eta}(\Phi_2(\alpha,\beta))
\end{equation*}

and 

\begin{equation*}
(**) \hskip1cm    -\Phi_2(df,\alpha)=[\Phi_1(f),\Phi_1(\alpha)].
\end{equation*}

We recall that $X_{\{\alpha,\beta\}}=[X_{\alpha},X_{\beta}]$ and compute

\begin{equation*}
\begin{split}
\Phi_1(\{\alpha,\beta\})&=(\widetilde{([X_{\alpha},X_{\beta}]^h)_2},[X_{\alpha},X_{\beta}]^h) \\
&=([(\widetilde{X^h_{\alpha}})_2,(\widetilde{X^h_{\beta}})_2]+\widetilde{\Omega(X^h_{\alpha},X^h_{\beta})}_2+(t^*-s^*)\theta(X^h_{\alpha},X^h_{\beta})\cdot\mathfrak{z},[X^h_{\alpha},X^h_{\beta}]\\&+\Omega(X^h_{\alpha},X^h_{\beta}))
\end{split}
\end{equation*}

and

\begin{equation*}
\begin{split}
&[\Phi_1(\alpha),\Phi_1(\beta)]+\overset{\circ}{\eta}(\Phi_2(\alpha,\beta))=([\widetilde{(X^h_{\alpha})_2},\widetilde{(X^h_{\beta}})_2],[X^h_{\alpha},X^h_{\beta}])\\
&+(\widetilde{\Omega(X^h_{\alpha},X^h_{\beta})_2}+(t^*-s^*)\theta(X^h_{\alpha},X^h_{\beta})\cdot\mathfrak{z},\Omega(X^h_{\alpha},X^h_{\beta})).
\end{split}
\end{equation*}

Thus equation $(*)$ is shown. For $(**)$ we have

\begin{equation*}
\Phi_2(df,\alpha)=(0,\Pi^*(df(X_\alpha)))
\end{equation*}

and

\begin{equation*}
\begin{split}
[\Phi_1(f),\Phi_1(\alpha)]&=[(0,\Pi^*f),(\widetilde{(X^h_{\alpha})_2},X^h_{\alpha})] \\
&=-(0,X^h_{\alpha}(\Pi^*f)) \\
&=-(0,\Pi^*(df(X_\alpha))).
\end{split}
\end{equation*}

The verification of the fourth equation of Definition A.5 is a long, but straightforward computation, whose crucial poins we will explain 
in the sequel. First of all, we have terms of the type:

\begin{equation*}
\begin{split}
\Phi_2(\{\alpha,\beta\},\gamma)&=(\Omega([X_{\alpha},X_{\beta}]^h,X^h_{\gamma}),\theta([X_{\alpha},X_{\beta}]^h,X^h_{\gamma}) \\
&+
\Pi^*(\{\alpha,\beta\}(X_{\gamma})-\gamma([X_{\alpha},X_{\beta}]))) \\
&=(\Omega([X_{\alpha},X_{\beta}]^h,X^h_{\gamma}),\theta([X^h_{\alpha},X^h_{\beta}],X^h_{\gamma})\\
&+\theta(\Omega(X_{\alpha},X_{\beta}),X^h_{\gamma})+g(\alpha,\beta,\gamma))
\end{split}
\end{equation*}

where we set

\begin{equation*}
g(\alpha,\beta,\gamma):=\Pi^*(\{\alpha,\beta\}(X_{\gamma})-\gamma([X_{\alpha},X_{\beta}])).
\end{equation*}

Furthermore, we have

\begin{equation*}
\begin{split}
-\gamma([X_{\alpha},X_{\beta}])&=-X_{\alpha}(\gamma(X_{\beta}))+X_{\beta}(\gamma(X_{\alpha}))+d\gamma(X_{\alpha},X_{\beta}) \\
&=-X_{\alpha}(\gamma(X_{\beta}))+X_{\beta}(\gamma(X_{\alpha}))+\omega(X_{\alpha},X_{\beta},X_{\gamma})
\end{split}
\end{equation*}

and

\begin{equation*}
\{\alpha,\beta\}(X_{\gamma})=-\omega(X_{\alpha},X_{\beta},X_{\gamma}).
\end{equation*}

Thus we obtain

\begin{equation*}
g(\alpha,\beta,\gamma)=\Pi^*(-X_{\alpha}(\gamma(X_{\beta}))+X_{\beta}(\gamma(X_{\alpha}))).
\end{equation*}

Finally, we have on the RHS of the fourth equation of Definition A.5 terms of the type
\begin{equation*}
\begin{split}
[\Phi_1(\alpha),\Phi_2(\beta,\gamma)]
&=[(\widetilde{(X^h_{\alpha})_2},X^h_{\alpha}), (\Omega(X^h_{\beta},X^h_{\gamma}),\theta(X^h_{\beta},X^h_{\gamma})+\Pi^*(\beta(X_{\gamma})-\gamma(X_{\beta})))] \\
&=(\widetilde{(X^h_{\alpha})_2},X^h_{\alpha}) . (\Omega(X^h_{\beta},X^h_{\gamma}),\theta(X^h_{\beta},X^h_{\gamma})+\Pi^*(\beta(X_{\gamma})-\gamma(X_{\beta}))) \\
&=([X^h_{\alpha},\Omega(X^h_{\beta},X^h_{\gamma})],X^h_{\alpha}(\theta(X^h_{\beta},X^h_{\gamma})+\Pi^*(\beta(X_{\gamma})-\gamma(X_{\beta})))\\
&-\theta(X^h_{\alpha},\Omega(X^h_{\beta},X^h_{\gamma}))).
\end{split}
\end{equation*}\\

Let us now explain why the LHS and the RHS of the fourth equation of Definition A.5 coincide here. Firstly, 
the terms with $g(\alpha,\beta,\gamma)$ cancel out with the terms of the type $X^h_{\alpha}(\Pi^*(\beta(X_{\gamma})-\gamma(X_{\beta}))$.
Secondly, $l_3'(\Phi_1(\alpha),\Phi_1(\beta),\Phi_1(\gamma))=0$ since $l_3'=0$, and 
\begin{equation*}
\Phi_1(l_3(\alpha,\beta,\gamma))=(0,\Pi^*(\omega(X_{\alpha},X_{\beta},X_{\gamma})))=(0,{(d\theta)}(X^h_{\alpha},X^h_{\beta},X^h_{\gamma})) \, .
\end{equation*}
Developping the $d\theta$-term and comparing with the terms of the type $\theta([{X_{\alpha},X_{\beta}]^h,}$
$X^h_{\gamma})$ resp.
$X^h_{\alpha}(\theta(X^h_{\beta},X^h_{\gamma}))$ show that with respect to the discussed contributions LHS and RHS are equal.
We next observe that the term $\theta(X^h_{\alpha},\Omega(X^h_{\beta},X^h_{\gamma}))$ and its signed cyclic permutations
appear identically on both sides of the equation.

To conclude, we start by observing that 
\begin{equation*}
[X^h_{\alpha},\Omega(X^h_{\beta},X^h_{\gamma})]=X^h_{\alpha}(\Omega(X^h_{\beta},X^h_{\gamma})) \,  , 
\end{equation*}

where the last term is interpreted as the vertical vector field associated to the 
$\mathfrak{pu}(H)$-valued map $X^h_\alpha(\Omega(X^h_\beta,X^h_\gamma)$). To show this, denote by $C\in\Omega^1(Y)\otimes\mathfrak{pu}(H)$ a principal connection on $\Pi:Y\rightarrow M$ with fixed curvature $\Omega$. Then $C(X_\alpha^h)=0$ because $X_\alpha^h$ is horizontal and $dC(X_\alpha^h,\Omega(X_\beta^h,X_\gamma^h))=0$ because $dC=-\frac{1}{2}[C,C]+\Omega$ and $\Omega$ is horizontal. So we obtain
\begin{equation*}
\begin{split}
C([X_\alpha^h,\Omega(X_\beta^h,X_\gamma^h)])&=-dC(X_\alpha^h,\Omega(X_\beta^h,X_\gamma^h))\\
&+X_\alpha^h(C(\Omega(X_\beta^h,X_\gamma^h))-\Omega(X_\beta^h,X_\gamma^h)(C(X_\alpha^h)) \\
&=X_\alpha^h(\Omega(X_\beta^h,X_\gamma^h)).
\end{split}
\end{equation*}

(Note that $\Omega(X_\beta^h,X_\gamma^h)$ is here  interpreted both as a vertical vector field and a $\mathfrak{pu}(H)$-valued map on $Y$). Observing that $[X_\alpha^h,\Omega(X_\beta^h,X_\gamma^h)]$ is vertical thus yields the formula.
\vspace{0.5cm}

Since the curvature $\Omega$ is a horizontal form, we have 
$\Omega([X_{\alpha},X_{\beta}]^h,X^h_{\gamma})=$
\newline\noindent $\Omega([X^h_{\alpha},X^h_{\beta}],X^h_{\gamma})$
and, by the Bianchi identity, we have $d\Omega(X^h_{\alpha},X^h_{\beta},X^h_{\gamma})=0$.
It follows that the remaining terms on both sides of the fourth equation of Definition A.5 indeed coincide.     \end{proof}

\vspace{0.5cm}

In symplectic geometry, restricting the target space of Kostant's prequantisation map to the vector fields that preserve the connection (which are generally referred to as infinitesimal quantomorphisms), gives a Lie algebra isomorphism. We now 
show the 2-plectic analogue: since the map $\Phi_1$ in Proposition 3.4 actually lies in $\mathscr{V}_{(P,Y)}$, we can "restrict" this Lie 2-algebra morphism to the weak infinitesimal symmetries, which then gives a quasi-isomorphism. 

\begin{thm}
The Lie 2-algebra morphism of Proposition 3.4 refines to a quasi-isomorphism  of Lie 2-algebras 
$\Phi=(\Phi_1,\Phi_2): L_{\bullet}(M,\omega)\rightarrow \hbox{WSym}(P,Y)$ as follows

\begin{center}
\begin{tikzcd}
C^{\infty}(M)\arrow[r, "d"] \arrow[d,"\Phi_1" '] 
& \Omega^1_{Ham}(M,\omega) \arrow[dl,tail,"\Phi_2"] \arrow[d,"\Phi_1"] \\
\mathfrak{X}_V(Y)\oplus C^{\infty}(Y) \arrow[r, "\eta" ']
& \mathscr{V}_{(P,Y)}
\end{tikzcd}
\end{center}

where

\begin{itemize}
\item $\Phi_1(f)=(0,\Pi^*f)$
\item $\Phi_1(\alpha)=\left(\widetilde{(X^h_{\alpha})_2},X^h_{\alpha},\iota_{X^h_{\alpha}}\theta +\Pi^*\alpha \right)$
\item $\Phi_2(\alpha,\beta)=\left(\Omega(X^h_{\alpha},X^h_{\beta}),\theta(X^h_{\alpha},X^h_{\beta})+\Pi^*(\alpha(X_{\beta})-\beta(X_{\alpha}))\right) \, .$
\end{itemize}

\end{thm}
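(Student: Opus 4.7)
My plan is to split the verification into three stages, with the quasi-isomorphism assertion being the substantive new content beyond Proposition 3.4.

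First, I would check that $\Phi_1(\alpha)$ genuinely lies in $\mathscr{V}_{(P,Y)}$. The $\theta$-equation $\mathscr{L}_{X^h_\alpha}\theta = d(\iota_{X^h_\alpha}\theta + \Pi^*\alpha)$ reduces via Cartan's formula to $\iota_{X^h_\alpha}d\theta = \Pi^*d\alpha$, which holds because $d\theta = \Pi^*\omega$ and $X^h_\alpha$ is $\Pi$-related to $X_\alpha$. The $A$-equation follows from horizontality of $\widetilde{(X^h_\alpha)_2}$ (giving $\iota_{\widetilde{(X^h_\alpha)_2}}A = 0$), from $dA = (t^*-s^*)\theta$, from the pointwise identity $\iota_{(X^h_\alpha)_2}(t^*-s^*)\theta = (t^*-s^*)\iota_{X^h_\alpha}\theta$, and from $(t^*-s^*)\Pi^*\alpha = 0$. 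Membership of $\Phi_1(f) = (0, \Pi^*f)$ in $\mathfrak{X}_V(Y)\oplus C^\infty(Y)$ is immediate.

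Second, I would verify the four axioms of Definition A.5 for $\Phi$ viewed as landing in $\hbox{WSym}(P,Y)$. Composing with the forgetful morphism $\Psi$ of Proposition 2.15 recovers exactly the morphism of Proposition 3.4, so the vector-field parts of every axiom are already in hand. Only the extra 1-form parts of axioms 2 and 4 remain; these follow from the same toolkit used in the previous stage (Cartan's formula and $d\theta = \Pi^*\omega$) together with Bianchi's identity and horizontality of $\Omega$, as in the final calculation of the proof of Proposition 3.4.

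Third, I would prove that $\Phi_1$ induces isomorphisms on the homology of the underlying two-term complexes. The kernel of $\eta$ consists of $(Z,h)$ with $Z = 0$, $(t^*-s^*)h = 0$, and $dh = 0$; the middle equation forces $h = \Pi^*\tilde h$, and the last then forces $\tilde h$ to be locally constant, so $\Phi_1$ matches $\ker d$ bijectively with $\ker \eta$. For the top degree, Corollary 2.13 is decisive: any $(\xi, X, B) \in \mathscr{V}_{(P,Y)}$ admits $\alpha \in \Omega^1(M)$ with $d\alpha = \iota_{\Pi_\ast X}\omega$ and, writing $\xi = \widetilde{X_2} + (t^*-s^*)g\cdot\mathfrak{z}$, satisfies $B = \iota_X\theta + dg + \Pi^*\alpha$; setting $Z := X - X^h_\alpha$, which is vertical since $X$ and $X^h_\alpha$ project to the same vector field on $M$, a direct calculation yields $(\xi, X, B) - \Phi_1(\alpha) = \eta(Z, g)$, establishing surjectivity on the degree-$0$ homology. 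For injectivity, $\Phi_1(\alpha) = \eta(Z, h)$ forces $X^h_\alpha = Z = 0$ (horizontal versus vertical), so $\alpha$ is closed and $\Pi^*\alpha = dh$; combined with $(t^*-s^*)h = 0$ this gives $h = \Pi^*\tilde h$ and hence $\alpha = d\tilde h$. I expect the main obstacle to be the bookkeeping in the second stage, particularly the 1-form part of the $l_3$-axiom, where several terms involving Lie derivatives of $\theta$, the curvature $\Omega$, and pullbacks by $\Pi$ must be reorganised to cancel; stages one and three are quick once Corollary 2.13 is available.
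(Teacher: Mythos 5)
Your proposal is correct and follows essentially the same route as the paper: first verify that $\Phi_1(\alpha)$ satisfies the two defining conditions of $\mathscr{V}_{(P,Y)}$ via Cartan's formula and $d\theta=\Pi^*\omega$, then reduce the morphism axioms to the new $1$-form component beyond Proposition 3.4, and finally establish the isomorphisms on homology using Corollary 2.13 and the decomposition $(\xi,X,B)-\Phi_1(\alpha)=\eta(Z,g)$ with $Z=X-X^h_\alpha$ vertical, exactly as in the paper (your injectivity argument in degree $0$ is in fact slightly more explicit than the paper's). One small correction to your anticipated ``main obstacle'': the fourth axiom of Definition A.5 has no extra $1$-form part to check, since every term in it lands in the degree $-1$ space $\mathfrak{X}_V(Y)\oplus C^{\infty}(Y)$, which is unchanged between $\mathfrak{X}_{mult}(P,\Pi)$ and $\hbox{WSym}(P,Y)$; the only genuinely new computation is the form part of the second axiom, which the paper carries out as its equation $(*)$.
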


\begin{proof}
First we show the $\Phi_1$ map indeed takes values in $\mathscr{V}_{(P,Y)}$. Denote $B_{\alpha}=\iota_{X^h_{\alpha}}\theta +\Pi^*\alpha$ for $\alpha\in\Omega^1_{Ham}(M,\omega)$. Because $(t^*-s^*)\Pi^*\alpha=0$ it is clear that $\mathscr{L}_{\widetilde{(X_{\alpha}^h})_2}A=(t^*-s^*)B_{\alpha}$. Moreover:

\begin{equation*}
\begin{split}
\mathscr{L}_{X^h_{\alpha}}\theta &= d\iota_{X^h_{\alpha}} \theta + \iota_{X^h_{\alpha}}d\theta \\
&=d(B_{\alpha}-\Pi^*\alpha)+\Pi^*\iota_{X_{\alpha}}\omega \\
&=dB_{\alpha}-\Pi^*(d\alpha -\iota_{X_{\alpha}}\omega )\\
&=dB_{\alpha}.
\end{split}
\end{equation*} 

We obviously have $\Phi_1(df)=(0,0,\Pi^*df)=\eta(0,\Pi^*f)=\eta(\Phi_1(f))$.\\

To verify that $\Phi$ defines a Lie 2-algebra morphism, the only additional equality to be satisfied is the "form part" of the 
second equation in Definition A.5. Concretely, we have to check the following equation:

\begin{equation*}
(*) \hspace{1cm} \iota_{X_{\{\alpha,\beta\}}^h}\theta +\Pi^*\{\alpha,\beta\}=
\end{equation*}
\begin{equation*}
\mathscr{L}_{X^h_{\alpha}}B_{\beta} - \mathscr{L}_{X^h_{\beta}}B_{\alpha} + \iota_{\Omega(X^h_{\alpha},X^h_{\beta})}\theta + d(\theta(X_{\alpha}^h,X_{\beta}^h)+\Pi^*(\alpha(X_{\beta})-\beta(X_{\alpha}))) \, .
\end{equation*}

We start with the first term of the LHS of $(*)$:
\begin{equation*}
(I) \hskip1cm \iota_{X_{\{\alpha,\beta\}}^h}\theta =\iota_{[X^h_\alpha,X^h_\beta]}\theta +\iota_{\Omega(X^h_\alpha,X^h_\beta)}\theta \, . 
\end{equation*}

We rewrite now the first two terms of the RHS of $(*)$, via

\begin{equation*}
\mathscr{L}_{X_\alpha^h}B_\beta -\mathscr{L}_{X_\beta^h}B_\alpha =\mathscr{L}_{X_\alpha^h}(\iota_{X_\beta^h}\theta +\Pi^*\beta)-\mathscr{L}_{X_\beta^h}(\iota_{X_\alpha^h}\theta +\Pi^*\alpha) 
\end{equation*}

and 

\begin{equation*}
\begin{split}
\mathscr{L}_{X_\alpha^h}\iota_{X_\beta^h}\theta -\mathscr{L}_{X_\beta^h}\iota_{X_\alpha^h}\theta&= \mathscr{L}_{X_\alpha^h}\iota_{X_\beta^h}\theta - d(\iota_{X_\beta^h}\iota_{X_\alpha^h}\theta) -\iota_{X_\beta^h}d\iota_{X_\alpha^h}\theta\\
&=\mathscr{L}_{X_{\alpha}^h}\iota_{X_{\beta}^h}\theta -d(\theta(X_{\alpha}^h,X_{\beta}^h)) -\iota_{X_\beta^h}\mathscr{L}_{X_\alpha^h}\theta +\iota_{X_\beta^h}\iota_{X_\alpha^h}d\theta\\
&=\iota_{[X_\alpha^h,X_\beta^h]}\theta-d(\theta(X_\alpha^h,X_\beta^h))-\Pi^*\iota_{X_\alpha}\iota_{X_\beta}\omega.
\end{split}
\end{equation*}

Furthermore, we calculate

\begin{equation*}
\begin{split}
\mathscr{L}_{X_\alpha^h}\Pi^*\beta - \mathscr{L}_{X_\beta^h}\Pi^*\alpha &= d\iota_{X_\alpha^h}\Pi^*\beta+\iota_{X_\alpha^h}d\Pi^*\beta -d\iota_{X_\beta^h}\Pi^*\alpha-\iota_{X_\beta^h}d\Pi^*\alpha \\
&= d\Pi^*(\beta(X_\alpha)-\alpha(X_\beta))+2\Pi^*\iota_{X_\alpha}\iota_{X_\beta}\omega.
\end{split}
\end{equation*}

We thus arrive at

\begin{equation*}
\mathscr{L}_{X_\alpha^h}B_\beta -\mathscr{L}_{X_\beta^h}B_\alpha=\iota_{[X_\alpha^h,X_\beta^h]}\theta-d(\theta(X_\alpha^h,X_\beta^h))+\Pi^*\{\alpha,\beta\} + d\Pi^*(\beta(X_\alpha)-\alpha(X_\beta)).
\end{equation*}

Reordering the terms in the preceding equation and employing equation $(I)$ immediately gives equation $(*)$.\\

Finally we show that this morphism defines a quasi-isomorphism.\\

We have that $\ker(\eta)=\{(Z,g)\in\mathfrak{X}_V(Y)\oplus C^{\infty}(Y) \ | \ Z=0, 
(t^*-s^*)g=0, dg=0 \}=\{\Pi^*f  \ | \  f\in C^{\infty}(M) \ \hbox{and} \ df=0\}$ implying  that the map $\Phi_1$ is an isomorphism between 
$\ker(d)$ and $\ker(\eta)$. 

We turn to proving  that $\Phi_1$ defines an isomorphism from 
$\Omega^1_{Ham}(M,\omega)/ \hbox{im}(d)$ to $\mathscr{V}_{(P,Y)}/ \Ima(\eta)$. 
Since a multiplicative vector field $(\xi,X)$ is a weak infinitesimal symmetry if and only if $X$ projects to a Hamiltonian vector field, it is clear that 
there is an $\alpha\in \Omega^1_{Ham}(M,\omega)$ and a $Z\in\mathfrak{X}_V(Y)$ such that $X-Z=X_\alpha^h$. Since $\xi=\widetilde{X_2}+(t^*-s^*)g\cdot\mathfrak{z}$ for a certain $g\in C^{\infty}(Y)$, we have for all $f\in C^{\infty}(M)$ that $\xi-\widetilde{Z_2} -(t^*-s^*)(g+\Pi^*f)\cdot\mathfrak{z}=\widetilde{X_\alpha^h}$. Now, if $B\in \Omega^1(Y)$ such that $\mathcal{L}_X\theta=dB$, it can be written as $B=\iota_X\theta+dg+\Pi^*(\alpha+df)$, 
and then $B-\iota_Z\theta-dg=\iota_X\theta+\Pi^*(\alpha+df)$. Thus we have $(\xi,X,B)-\eta(Z,g+\Pi^*f)=\Phi_1(\alpha+df)$. 
Unicity of $\alpha$ (up to differentials of functions) now yields the desired isomorphism.
\end{proof}

\begin{rem}
We observe that the quasi-isomorphism $\Phi$ of the preceding theorem is injective but, of course, is far from being surjective. 
\end{rem}

\begin{rem}
 Theorem 5.1 of \cite{2020arXiv200312874K} shows that $L_{\bullet}(M,\omega)$ and $\hbox{WSym}(P,Y)$ are quasi-isomorphic
 via the existence of an ``invertible butterfly''. Our techniques are completely different and yield an explicit map of Lie 2-algebras
 that turns out to be a quasi-isomorphism. From the point of view of \cite{MR3241135},
the preceding theorem is a ``globalisation'' of the 2-plectic specialisation of Theorem 4.2.2 there concerning 
prequantisation in n-plectic geometry. There the authors give a quasi-isomorphism between the Lie n-algebra of 
observables of a n-plectic manifold, and what they call the \textit{infinitesimal quantomorphisms}, defined via 
a \v{C}ech-Deligne complex.
We do not rely on open coverings as used there but formulate the algebraic objects in a global way by 
first associating a global geometric prequantisation to an integral 2-plectic manifold.
\end{rem}

\vspace{0.5cm}

\begin{defa}
Let $(M,\omega)$ be a 2-plectic manifold with integral $\omega$, and $(P,Y)$ a global geometric prequantisation of 
$(M,\omega)$. The Lie 2-algebra morphism $\Phi=(\Phi_1,\Phi_2)$  from $L_{\bullet}(M,\omega)$ to 
$\hbox{WSym}(P,Y;A,\theta)$
of the above theorem is called the \textit{prequantisation map of $(M,\omega)$}.\\
\end{defa}

%
%

\begin{appendices}

\section{Lie 2-algebras and their morphisms}

In this appendix we assemble the definitions regarding Lie 2-algebras and crossed modules 
of Lie algebras needed in this article. For a more complete treatment see \cite{MR2068522}.

\begin{defa}
Let $L_{\bullet}:=L_{-1}\rightarrow L_0$ be a 2-term complex of vector spaces. A \textit{Lie 2-algebra structure on $L_{\bullet}$}
 consists in (multi)linear graded antisymmetric maps $\{l_k \, \vert \, 1\leq k \leq 3  \}$ with the degree of $l_k$ being equal to $2{-}k$:

\begin{itemize}
\item $l_1:L_{-1}\rightarrow L_0$
\item $l_2:\Lambda^{2} L_{\bullet}\rightarrow L_{\bullet}$ 
\item $l_3:\Lambda^3 L_0\rightarrow L_{-1}$
\end{itemize}

such that the following equations hold, for $x,y,z,t\in L_0$ and $u,v\in L_{-1}$ :

\begin{equation*}
l_1(l_2(x,u))=l_2(x,l_1(u)) \hskip0.3cm \hbox{and} \hskip0.3cm l_2(l_1(u),v)=l_2(u,l_1(v))
\end{equation*}

\begin{equation*}
l_1(l_3(x,y,z))+l_2(l_2(x,y),z)-l_2(l_2(x,z),y)+l_2(l_2(y,z),x)=0
\end{equation*}

\begin{equation*}
l_3(l_1(u),x,y)+l_2(l_2(x,y),u)-l_2(l_2(x,u),y)+l_2(l_2(y,u),x)=0
\end{equation*}

\begin{equation*}
\begin{split}
& l_3(l_2(x,y),z,t)-l_3(l_2(x,z),y,t)+l_3(l_2(x,t),y,z) \\
& +l_3(l_2(y,z),x,t)-l_3(l_2(y,t),x,z)+l_3(l_2(z,t),x,y) \\
& = l_2(l_3(x,y,z),t)-l_2(l_3(x,y,t),z)+l_2(l_3(x,z,t),y)-l_2(l_3(y,z,t),x) \, .
\end{split}
\end{equation*}

\end{defa}

\begin{defa}
A \textit{Lie algebra crossed module} is given by two Lie algebras $\mathfrak{h}$ and $\mathfrak{g}$, a Lie algebra morphism 
$\eta:\mathfrak{h}\rightarrow \mathfrak{g}$ and an action $\vartheta:\mathfrak{g}\times\mathfrak{h}\rightarrow\mathfrak{h}$ by 
derivations such that\\

(A1) \hspace{2cm} $\vartheta(\eta(v),w)=[v,w]_{\mathfrak{h}}$\\

(A2) \hspace{2cm} $\eta(\vartheta(X,w))=[X,\eta(w)]_{\mathfrak{g}}$\\

for all $v,w\in\mathfrak{h}$ and $X\in\mathfrak{g}$. One denotes such a crossed module as a quadruple 
$(\mathfrak{h},\mathfrak{g},\eta,\vartheta)$.

\end{defa}

\begin{rem}
A Lie algebra crossed module is naturally given the structure of a \textit{strict} Lie 2-algebra (i.e. a Lie 2-algebra with third bracket identically 0) by setting for $X,Y\in \mathfrak{g}$, $v\in\mathfrak{h}$:

\begin{itemize}
\item $L_0:=\mathfrak{g}$, $L_{-1}:=\mathfrak{h}$
\item $l_1:=\eta$
\item $l_2(X,Y):=[X,Y]_{\mathfrak{g}}$, $l_2(X,v):=\vartheta(X,v)$
\item $l_3:=0$
\end{itemize}
Note that one often writes $[,]$ for the operation $l_2$ of the Lie 2-algebra associated to a Lie algebra crossed module.

\end{rem}

\begin{ex} We will now describe the crossed module structure on the multiplicative vector fields on a groupoid, which is used in Section 2. We will only give the general construction without any proofs, all the details can be found in \cite{2016arXiv160903944B} (see also \cite{MR3985597}). Let indeed $G_1\rightrightarrows G_0$ be a Lie groupoid and $\hbox{Lie}(G_1)\rightarrow G_0$ the associated Lie algebroid. Recall that 
$\hbox{Lie}(G_1)=\ker(s_*)|_{\varepsilon(G_0)}$. We note $s,t$ the source respectively the target map, $\varepsilon$ the unit map, $i$ the inverse map, and $R_g,L_g$ the right respectively left multiplication by an element $g\in G_1$. We have the right and left invariant vector fields on $G_1$ associated to a section 
$a\in \Gamma(G_0,\hbox{Lie}(G_1))$, given respectively by:

\begin{equation*}
\overrightarrow{a}(g)=(R_g)_{*}(a(t(g))) \hspace{0.3cm} \hbox{and}
\end{equation*}

\begin{equation*}
\overleftarrow{a}(g)=(L_g)_{*}(i_{*})(a(t(g)))        \, .
\end{equation*}

Then $\eta(a)=\overrightarrow{a}+\overleftarrow{a}$ is a multiplicative vector field on $G_1$. Given $\xi\in \mathfrak{X}_{mult}(G_1)$, then its action on the Lie algebroid sections is given by:

\begin{equation*}
[\xi,a]:=[\xi,\overrightarrow{a}]          \, ,
\end{equation*}

where the bracket on the right hand side is the bracket of vector fields on $G_1$ (recall that $\Gamma(G_0,\hbox{Lie}(G_1))$ is,
as a $C^{\infty}(G_0)$-module, isomorphic 
to the right-invariant vector fields on $G_1$). Thus the two-term complex of Lie algebras:

\begin{equation*}
\Gamma(G_0, \hbox{Lie}(G_1))\xrightarrow{\eta}\mathfrak{X}_{mult}(G_1)
\end{equation*}

together with the action defined above, and the usual bracket on $\Gamma(G_0,\hbox{Lie}(G_1))$ and 
$\mathfrak{X}_{mult}(G_1)$, is a Lie algebra crossed module, hence a strict Lie 2-algebra.
\end{ex}

Considering a Lie 2-algebra as a special case of a Lie $\infty$-algebra one obtains immediately the following

\begin{defa}
Let $(L_{\bullet}=L_{-1}\rightarrow L_0,\{l_k\})$ and $(L_{\bullet}' =L_{-1}'\rightarrow L_{0}' ,\{l_k'\})$ be two Lie 2-algebras. A \textit{Lie ${\infty}$-algebra morphism $L_{\bullet}\xrightarrow{\Phi} L_{\bullet}' $} is given by linear maps :
\begin{itemize}
\item $\Phi_1:L_{\bullet}\rightarrow L_{\bullet}'$
\item $\Phi_2:\Lambda^2 L_{\bullet}\rightarrow  L_{\bullet}'  $
\end{itemize}
with $\Phi_1$ of degree 0 and $\Phi_2$ of degree $-1$, such that for every $x,y,z\in L_0$ and $u\in L_{-1}$ :
\begin{equation*}
\Phi_1(l_1(u))=l_1'(\Phi_1(u))
\end{equation*}

\begin{equation*}
\Phi_1(l_2(x,y)) = l_2'(\Phi_1(x),\Phi_1(y)) + l_1'(\Phi_2(x,y))
\end{equation*}

\begin{equation*}
\Phi_1(l_2(u,x))  =l_2'(\Phi_1(u),\Phi_1(x)) +  \Phi_2(l_1(u),x)
\end{equation*}

\begin{equation*}
	\begin{split}
&\Phi_2(l_2(x,y),z)-\Phi_2(l_2(x,z),y)+\Phi_2(l_2(y,z),x)+\Phi_1(l_3(x,y,z))\\
&=l_2'(\Phi_1(x),\Phi_2(y,z)) -l_2'(\Phi_1(y),\Phi_2(x,z))+l_2'(\Phi_1(z),\Phi_2(x,y)) \\
&+l_3'(\Phi_1(x),\Phi_1(y),\Phi_1(z)).
	\end{split}
\end{equation*}
\end{defa}

\begin{rem}
A Lie $\infty$-algebra morphism between two Lie 2-algebras is, of course, also called a {\it Lie 2-algebra morphism}.
\end{rem}

\begin{rem}
Following a suggestion of Camille Laurent-Gengoux, we visualize a Lie 2-algebra morphism $L_{\bullet}\xrightarrow{\Phi} L'_{\bullet}$ as below:

\begin{center}
\begin{tikzcd}
L_{-1}\arrow[r,"l_1"] \arrow[d,"\Phi_1" '] 
& L_0 \arrow[dl,tail,"\Phi_2"] \arrow[d,"\Phi_1"] \\
L'_{-1} \arrow[r,"l'_1"']
& L'_0
\end{tikzcd}
\end{center}

Note that the outer square is a commutative diagram since $\Phi_1\circ l_1=l'_1\circ\Phi_1$. We underline that neither ``subdiagrams'' containing the diagonal commute, nor is $\Phi_2$ defined on the vector space $L_0$ (nor does the form of the diagonal arrow indicate any kind of injectivity).
\end{rem}

\begin{defa}
A cochain complex morphism is called a {\it quasi-isomorphism} if the induced map in cohomology is an isomorphism. 
A Lie $\infty$-algebra morphism between two Lie 2-algebras (or Lie $\infty$-algebras, in fact) is called a {\it quasi-isomorphism} 
if viewed as a cochain morphism it is a quasi-isomorphism.

\end{defa}

%
%


\section{The case of exact 2-plectic manifolds}

In this appendix, we expose the case of 2-plectic manifolds $(M,\omega)$ with a potential $\chi \in \Omega^2(M)$ for the 3-form 
$\omega$, i.e., $d\chi = \omega$. This case is important for applications in physics, compare, e.g., \cite{MR334772}, \cite{MR2559661} and \cite{MR3937870}.\\

Recall that for $P\xrightarrow{r}B$ and $P'\xrightarrow{r'}B$ two principal $S^1$-bundles over $B$ we may form a new bundle 
$P\otimes P'\rightarrow B$ defined by

\begin{equation*}
P\otimes P':=(P {{}_{r}\times_{r'}}P')/S^1,
\end{equation*}

where the quotient is taken so that $(p,p')\sim (p z,p' z^{-1})$ for $p\in P$, $p'\in P'$ and $z\in S^1$. We may also form the dual bundle 
$P^*$ defined by the same fiber bundle but provided with the action $p\cdot z=pz^{-1}$.\\

Let $Y\xrightarrow{\Pi}M$ be a surjective submersion and consider $Q\xrightarrow{r} Y$ a principal $S^1$-bundle, with  
connection 1-form $\widetilde{A}\in i\Omega^1(Q)$. We denote $F^{\widetilde{A}}\in i\Omega^2(Y)$ the curvature of this principal connection, 
so that $d\widetilde{A}=r^*F^{\widetilde{A}}$. Furthermore, we put $\Pi_k:Y^{[2]}\rightarrow Y$ for $k=1,2$, where $\Pi_k$ is the projection 
that omits the $k^{th}$ factor. Then 

\begin{equation*}
1\rightarrow Y\times S^1\rightarrow(\pi_1^*Q)^*\otimes \pi_2^*Q\rightarrow Y^{[2]}\rightarrow 1
\end{equation*}

is a $S^1$-central extension of the Lie groupoid $Y^{[2]}\rightrightarrows Y$, and therefore $\delta(Q):=(\pi_1^*Q)^*\otimes \pi_2^*Q$ yields a bundle 
gerbe. In fact (compare \cite{MR1794295}), a bundle gerbe has vanishing Dixmier-Douady class precisely when it is isomorphic (in an appropriate sense) 
to a bundle gerbe of the form $(\delta(Q),Y)$, and a choice of an isomorphism $P\rightarrow \delta(Q)$ is called a \textit{trivialisation} of the bundle gerbe $(P,Y)$.\\

Let us recall several observations concerning the Lie groupoid $\delta(Q)\rightrightarrows Y$. An element of $\delta(Q)$ is written as a quadruple 
$([q_1,q_2],y_1,y_2)$ where $q_i\in Q$, $(y_1,y_2)\in Y^{[2]}$ such that $r(q_1)=y_1$, $r(q_2)=y_2$, and $[q_1,q_2] $ is the class 
stemming from the equivalence relation:  $(q_1,q_2)\sim (q_1z,q_2z)$ for all $z\in S^1$. In the sequel, we denote such a quadruple
by the equivalence classe $[q_1,q_2]$. Then the structure maps of the Lie groupoid $\delta(Q)\rightrightarrows Y$ are given by

\begin{itemize}
\item $s([q_1,q_2])=r(q_2)$
\item $t([q_1,q_2])=r(q_1)$.
\end{itemize}

Note that for $[q_1,q_2] $,$[ q_3,q_4] $ in $\delta(Q)$, such that $s([q_1,q_2])=r(q_2)=r(q_3)=t([q_3,q_4])$, we may assume without loss of generality that 
$q_2=q_3$, since the equivalence classes are defined by the orbits of the $S^1$-action. Therefore the groupoid multiplication can be written as

\begin{equation*}
m([q_1,q_2],[q_2,q_3])=[q_1,q_3].
\end{equation*}

For $y\in Y$ we have furthermore

\begin{equation*}
\delta(Q)_{(y,y)}=\left((\pi_1^*Q)^*\otimes \pi_2^*Q\right)_{(y,y)}=Q_y^*\otimes Q_y.
\end{equation*}

Thus denoting by $\Delta:Y\rightarrow Y^{[2]}$ the diagonal inclusion, the bundle $\Delta^*(\delta(Q))$ is canonically trivialised. Taking for $y\in Y$ any
$q\in Q_y$, the canonical section $\varepsilon:Y\rightarrow \Delta^*(\delta(Q))$ is given by $\epsilon(y)=[q,q]$ and $\epsilon$ is taken as the unit map 
of the Lie groupoid 
$\delta(Q)\rightrightarrows Y$. Finally the inverse map is given as

\begin{equation*}
[q_1,q_2]^{-1}=[q_2,q_1].
\end{equation*}

We now describe a connective structure on $(\delta(Q),Y)$. Consider the projections $(\pi_1^*Q)^*\times_{Y^{[2]}}\pi_2^*Q\xrightarrow{p_k}Q$, 
where $p_k$ projects to the $k^{th}$ factor ($k=1,2$), and the 1-form $A=p_1^*\widetilde{A}-p_2^*\widetilde{A}$. Then $A$ defines a principal 
connection on $(\pi_1^*Q)^*\otimes \pi_2^*Q$. More precisely, we have:

\begin{lem}
We keep the notations from the previous considerations. Then
\begin{enumerate}[label=(\roman*)]
\item the connection $A$ on $\delta(Q)\rightarrow Y^{[2]}$ is multiplicative,
\item {a curving of this connection is given by the curvature $F^{\widetilde{A}}$ of $\widetilde{A}$.}
\end{enumerate}

Therefore $(A,F^{\widetilde{A}})$ is a connective structure on the bundle gerbe $(\delta(Q),Y)$. Moreover, the 3-curvature is identically zero
and thus $(\delta(Q),Y)$ has vanishing Dixmier-Douady class. 
\end{lem}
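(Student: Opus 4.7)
The plan is straightforward: every property follows from the corresponding property of $\widetilde{A}$ on $Q$ via functoriality of pullback, once one keeps careful track of the $S^1$-quotient defining $\delta(Q) = (\pi_1^*Q)^* \otimes \pi_2^*Q$.

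First I would verify that the 1-form $\widehat{A} := p_1^*\widetilde{A} - p_2^*\widetilde{A}$ on the fiber product $(\pi_1^*Q)^* \times_{Y^{[2]}} \pi_2^*Q$ descends to a well-defined 1-form $A$ on $\delta(Q)$. The quotient is by the diagonal $S^1$-action $(q_1,q_2) \mapsto (q_1 z, q_2 z)$, whose fundamental vector field is $(\mathfrak{z}_1,\mathfrak{z}_2)$; since $\widetilde{A}(\mathfrak{z}) = 1$ on each factor, the difference $\widehat{A}$ annihilates it. Invariance of $\widehat{A}$ under this diagonal action follows from the separate invariance of $\widetilde{A}$ on each copy of $Q$. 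Hence $\widehat{A}$ is basic for the quotient map and defines a form $A$ on $\delta(Q)$; that $A$ is a principal connection for the residual $S^1$-action on $\delta(Q) \to Y^{[2]}$ follows again from the connection properties of $\widetilde{A}$.

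For (i), representing a composable pair in $\delta(Q){{}_s\times_t}\delta(Q)$ as $([q_1,q_2],[q_2,q_3])$ (using the freedom of the $S^1$-action to align the middle representative), we have $m([q_1,q_2],[q_2,q_3]) = [q_1,q_3]$. Evaluating pointwise on a lifted tangent vector $(X_1,X_2,X_3)$ at $(q_1,q_2,q_3)$ yields the telescoping identity
\begin{equation*}
(m^*A)(X_1,X_2,X_3) = \widetilde{A}(X_1) - \widetilde{A}(X_3) = \bigl(\widetilde{A}(X_1) - \widetilde{A}(X_2)\bigr) + \bigl(\widetilde{A}(X_2) - \widetilde{A}(X_3)\bigr),
\end{equation*}
which is exactly $(\mathrm{proj}_1^*A + \mathrm{proj}_2^*A)(X_1,X_2,X_3)$. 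For (ii), since $d\widetilde{A} = r^*F^{\widetilde{A}}$ and $r\circ p_1$, $r\circ p_2$ descend respectively to the target and source maps $t,s:\delta(Q)\to Y$, we compute
\begin{equation*}
dA = p_1^*d\widetilde{A} - p_2^*d\widetilde{A} = t^*F^{\widetilde{A}} - s^*F^{\widetilde{A}} = (t^*-s^*)F^{\widetilde{A}},
\end{equation*}
so $\theta := F^{\widetilde{A}}$ is a curving.

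Finally, the 3-curvature $\kappa$ satisfies $d\theta = \Pi^*\kappa$. Since $F^{\widetilde{A}}$ is the curvature of a connection on an abelian principal bundle, the Bianchi identity gives $dF^{\widetilde{A}} = 0$, hence $\Pi^*\kappa = 0$; as $\Pi$ is a surjective submersion, $\kappa = 0$, and consequently the Dixmier-Douady class of $(\delta(Q),Y)$ vanishes. There is no substantial obstacle here — the entire argument is bookkeeping of pullbacks and of the $S^1$-quotient defining the tensor bundle; the only point requiring real care is verifying that $p_1, p_2$ induce $t, s$ on the quotient, so that the identification $\theta = F^{\widetilde{A}}$ has the correct signs.
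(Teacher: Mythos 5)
Your proposal is correct and follows essentially the same route as the paper: the telescoping identity $\widetilde{A}(v_{q_1})-\widetilde{A}(v_{q_3})=(\widetilde{A}(v_{q_1})-\widetilde{A}(v_{q_2}))+(\widetilde{A}(v_{q_2})-\widetilde{A}(v_{q_3}))$ for multiplicativity, the computation $dA=p_1^*r^*F^{\widetilde{A}}-p_2^*r^*F^{\widetilde{A}}=(t^*-s^*)F^{\widetilde{A}}$ for the curving, and $dF^{\widetilde{A}}=0$ for the vanishing of the 3-curvature. The only addition is your preliminary check that $p_1^*\widetilde{A}-p_2^*\widetilde{A}$ is basic for the diagonal $S^1$-action and so descends to $\delta(Q)$, which the paper treats as part of the setup preceding the lemma; this is a reasonable piece of extra care rather than a different argument.
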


\begin{proof}
For $q_1,q_2,q_3\in Q$ and $v_{q_1},v_{q_2},v_{q_3}$ tangent vectors at the respective points, we compute
\begin{equation*}
\begin{split}
{}&(m^*A)_{[q_1,q_2],[q_2,q_3]}((v_{q_1},v_{q_2}),(v_{q_2},v_{q_3}))=A_{[q_1,q_3]}(v_{q_1},v_{q_3}) 
=\widetilde{A}_{q_1}(v_{q_1})-\widetilde{A}_{q_3}(v_{q_3})=\\
&\widetilde{A}_{q_1}(v_{q_1})-\widetilde{A}_{q_2}(v_{q_2})+\widetilde{A}_{q_2}(v_{q_2})-\widetilde{A}_{q_3}(v_{q_3}) 
=A_{[q_1,q_2]}(v_{q_1},v_{q_2})+A_{[q_2,q_3]}(v_{q_2},v_{q_3})=\\
&(\hbox{proj}_1^*A+\hbox{proj}_2^*A)_{[q_1,q_2] ,[q_2,q_3]}((v_{q_1},v_{q_2}),(v_{q_2},v_{q_3})),
\end{split}
\end{equation*}

where $\hbox{proj}_k:\delta(Q){{}_s\times_t}\delta(Q)\rightarrow \delta(Q)$ is the projection onto the $k^{th}$ factor, for $k=1,2$. 
Thus $A$ is multiplicative.\\

 Moreover, we have 
 \begin{equation*}
dA=p_1^*d\widetilde{A}-p_2^*d\widetilde{A}=p_1^*r^*F^{\widetilde{A}} -p_2^*r^*F^{\widetilde{A}} =(t^*-s^*)F^{\widetilde{A}}.
\end{equation*}

This shows that $F^{\widetilde{A}}$ provides a curving for $A$, and that $(A,F^{\widetilde{A}})$ is indeed a connective structure for $(\delta(Q),Y)$. Since $dF^{\widetilde{A}}=0$, the 3-curvature equals zero, and therefore $(\delta(Q),Y)$ has vanishing Dixmier-Douady class.
\end{proof}

We now construct the prequantisation map for an exact 2-plectic manifold. Let $(M,\omega)$ be a 2-plectic manifold with $\omega=d\chi$ an exact 3-form. Consider the trivial bundle $\Pi:Y:=M\times PU(H)\rightarrow M$, equipped with the trivial connection, that we denote here by $C\in\Omega^1(M\times PU(H))\otimes\mathfrak{pu}(H)$. Recall that $C$ is given by

\begin{equation*}
C_{(m,g)}(u_m,v_g)=(L_{g^{-1}})_{*g}(v_g),
\end{equation*}

i.e., $C$ is the pullback of the Maurer-Cartan form on $PU(H)$ via the projection $Y\rightarrow PU(H)$. \\

Observe that $Y^{[2]}=M\times PU(H)\times PU(H)$. We then have $\psi:M\times PU(H)\times PU(H)\rightarrow PU(H)$, $\psi(m,g,g')=g^{-1}g'$ (compare Lemma 1.10). We also define $Q:=M\times U(H)\rightarrow Y$. Then $\delta(Q):=(\pi_1^*Q)^*\otimes \pi_2^*Q$ is isomorphic to $\psi^*U(H)$, the pullback of the principal $S^1$-bundle $U(H)\rightarrow PU(H)$ by the map $\psi$. To see this, note that the elements of $\delta(Q)$ are triples $(m,[u,u'])$, where $m\in M$, $u,u'\in U(H)$, and the class $[u,u']$ is taken with respect to $(u,u')\sim (uz,u'z)$ for all $z\in S^1$. The elements of $\psi^*U(H)$ are triples $(m,g,u)$, where $m\in M$, $g\in PU(H)$ and $u\in U(H)$. Then the isomorphism $\delta(Q)\rightarrow \psi^* U(H)$ is given by

\begin{equation*}
(*) \hspace{1cm} (m,[u,u'])\mapsto (m,q(u),u^{-1}u') \, ,
\end{equation*}

with $U(H)\xrightarrow{q}PU(H)$ being the canonical projection. The map $(*)$ is well-defined and equivariant, and therefore 
an isomorphism of principal $S^1$-bundles. \\

Let $\widetilde{A}$ be the principal connection on $Q$ defined by the Maurer-Cartan form on $U(H)$, projected onto $i\mathbb{R}=\hbox{Lie}(S^1)$ via a splitting $\mathfrak{pu}(H)\rightarrow \mathfrak{u}(H)$ (see Lemma 1.6). Let $F^{\widetilde{A}}$ be the curvature of this connection. On the bundle 
$\delta(Q)\rightarrow Y^{[2]}$, we consider as above the connection $A=p_1^*\widetilde{A}-p_2^*\widetilde{A}$ (recall that $A$ is multiplicative by the preceding lemma). We set $\theta=\Pi^*\chi+F^{\widetilde{A}}$. Then $(A,\theta)$ is a connective structure on the bundle gerbe $(\delta(Q),Y)$, with 3-curvature 
$(-i2\pi) \omega$. For  a vector field $X\in\mathfrak{X}(M)$, the horizontal lift $X^h$ to $Y$ with respect to the conection $C$ is simply $(X,0)$, which we  denote again by $X$. Then we have $X_2=(X,0,0)$ and we continue to denote this vector field by $X$. The horizontal lift of a vector field $Z\in\mathfrak{X}(Y^{[2]})$ with respect to the connection $A$ will be denoted by $\widetilde{Z}$.\\

We conclude with an explicit description of the components of the Lie 2-algebra morphism of Theorem 3.5 in the exact case.
For the sake of better readability, we omit the symbol $\Pi^*$ for pullbacks of functions and differential forms with respect to the 
projection $\Pi:Y\rightarrow M$. Furthermore, given a vector field $V$ on a factor of a product $A\times B$, we denote its 
trivial extension to this product again by $V$. With these conventions, we obtain for $f\in C^{\infty}(M)$ and 
$\alpha,\beta \in  \Omega^1_{Ham}(M,d\chi)$:

\begin{itemize}
\item $\Phi_1(f)=(0,f)$
\item $\Phi_1(\alpha)=(\widetilde{X_\alpha},X_\alpha,\iota_{X_\alpha}\chi +\alpha)$
\item $\Phi_2(\alpha,\beta)=(0,\chi(X_\alpha,X_\beta)+\alpha(X_\beta)-\beta(X_\alpha))$.
\end{itemize}

\end{appendices}

%
%
\bibliography{two-prequantisation-5-7-21}

\begin{thebibliography}{10}

\bibitem{MR2068522}
John~C. Baez and Alissa~S. Crans.
\newblock Higher-dimensional algebra. {VI}. {L}ie 2-algebras.
\newblock {\em Theory Appl. Categ.}, 12:492--538, 2004.

\bibitem{MR2566161}
John~C. Baez, Alexander~E. Hoffnung, and Christopher~L. Rogers.
\newblock Categorified symplectic geometry and the classical string.
\newblock {\em Comm. Math. Phys.}, 293(3):701--725, 2010.

\bibitem{MR2817778}
Kai Behrend and Ping Xu.
\newblock Differentiable stacks and gerbes.
\newblock {\em J. Symplectic Geom.}, 9(3):285--341, 2011.

\bibitem{2016arXiv160903944B}
Daniel Berwick-Evans and Eugene Lerman.
\newblock Lie 2-algebras of vector fields.
\newblock {\em Pacific J. Math.}, 309(1):1--34, 2020.

\bibitem{Brylinski}
Jean-Luc Brylinski.
\newblock {\em Loop spaces, characteristic classes and geometric quantization}.
\newblock Modern Birkh\"{a}user Classics. Birkh\"{a}user Boston, Inc., Boston,
  MA, 2008.
\newblock Reprint of the 1993 edition.

\bibitem{Bunk21}
Severin {Bunk}.
\newblock {Gerbes in Geometry, Field Theory, and Quantisation}.
\newblock {\em arXiv e-prints arXiv:2102.10406}.

\bibitem{MR4023383}
Henrique Bursztyn and Thiago Drummond.
\newblock Lie theory of multiplicative tensors.
\newblock {\em Math. Ann.}, 375(3-4):1489--1554, 2019.

\bibitem{col}
Braxton~L. {Collier}.
\newblock {Infinitesimal Symmetries of Dixmier-Douady Gerbes}.
\newblock {\em arXiv e-prints arXiv:2008.05184}.

\bibitem{MR3241135}
Domenico Fiorenza, Christopher~L. Rogers, and Urs Schreiber.
\newblock {$L_\infty$}-algebras of local observables from higher prequantum
  bundles.
\newblock {\em Homology Homotopy Appl.}, 16(2):107--142, 2014.

\bibitem{MR2377868}
Dale Husem\"{o}ller, Michael Joachim, Branislav Jur\v{c}o, and Martin
  Schottenloher.
\newblock {\em Basic bundle theory and {$K$}-cohomology invariants}, volume 726
  of {\em Lecture Notes in Physics}.
\newblock Springer, Berlin, 2008.

\bibitem{MR334772}
Jerzy Kijowski.
\newblock A finite-dimensional canonical formalism in the classical field
  theory.
\newblock {\em Comm. Math. Phys.}, 30:99--128, 1973.

\bibitem{MR1393940}
Shoshichi Kobayashi and Katsumi Nomizu.
\newblock {\em Foundations of differential geometry. {V}ol. {I}}.
\newblock Wiley Classics Library. John Wiley \& Sons, Inc., New York, 1996.
\newblock Reprint of the 1963 original, A Wiley-Interscience Publication.

\bibitem{Kohno}
Toshitake Kohno.
\newblock {\em Conformal field theory and topology}, volume 210 of {\em
  Translations of Mathematical Monographs}.
\newblock American Mathematical Society, Providence, RI, 2002.

\bibitem{MR0294568}
Bertram Kostant.
\newblock Quantization and unitary representations. {I}. {P}requantization.
\newblock In {\em Lectures in modern analysis and applications, {III}}, pages
  87--208. Lecture Notes in Math., Vol. 170. 1970.

\bibitem{2020arXiv200312874K}
Derek {Krepski} and Jennifer {Vaughan}.
\newblock {Multiplicative vector fields on bundle gerbes}.
\newblock {\em arXiv e-prints arXiv:2003.12874}.

\bibitem{MR1471480}
Andreas Kriegl and Peter~W. Michor.
\newblock {\em The convenient setting of global analysis}, volume~53 of {\em
  Mathematical Surveys and Monographs}.
\newblock American Mathematical Society, Providence, RI, 1997.

\bibitem{MR2157566}
Kirill C.~H. Mackenzie.
\newblock {\em General theory of {L}ie groupoids and {L}ie algebroids}, volume
  213 of {\em London Mathematical Society Lecture Note Series}.
\newblock Cambridge University Press, Cambridge, 2005.

\bibitem{MR1617335}
Kirill C.~H. Mackenzie and Ping Xu.
\newblock Classical lifting processes and multiplicative vector fields.
\newblock {\em Quart. J. Math. Oxford Ser. (2)}, 49(193):59--85, 1998.

\bibitem{MR2428390}
Peter~W. Michor.
\newblock {\em Topics in differential geometry}, volume~93 of {\em Graduate
  Studies in Mathematics}.
\newblock American Mathematical Society, Providence, RI, 2008.

\bibitem{MR1405064}
Michael~K. Murray.
\newblock Bundle gerbes.
\newblock {\em J. London Math. Soc. (2)}, 54(2):403--416, 1996.

\bibitem{MR2681698}
Michael~K. Murray.
\newblock An introduction to bundle gerbes.
\newblock In {\em The many facets of geometry}, pages 237--260. Oxford Univ.
  Press, Oxford, 2010.

\bibitem{MR1794295}
Michael~K. Murray and Daniel Stevenson.
\newblock Bundle gerbes: stable isomorphism and local theory.
\newblock {\em J. London Math. Soc. (2)}, 62(3):925--937, 2000.

\bibitem{MR3985597}
Cristian Ortiz and James Waldron.
\newblock On the {L}ie 2-algebra of sections of an {$\mathscr{LA}$}-groupoid.
\newblock {\em J. Geom. Phys.}, 145:103474, 34, 2019.

\bibitem{MR2912195}
Christopher~L. Rogers.
\newblock {\em Higher {S}ymplectic {G}eometry}.
\newblock 2011.
\newblock Thesis (Ph.D.)--University of California, Riverside. arXiv:1106.4068.

\bibitem{Rogers13}
Christopher~L. Rogers.
\newblock 2-plectic geometry, {C}ourant algebroids, and categorified
  prequantization.
\newblock {\em J. Symplectic Geom.}, 11(1):53--91, 2013.

\bibitem{MR2559661}
Narciso Rom\'{a}n-Roy.
\newblock Multisymplectic {L}agrangian and {H}amiltonian formalisms of
  classical field theories.
\newblock {\em SIGMA Symmetry Integrability Geom. Methods Appl.}, 5:Paper 100,
  25, 2009.

\bibitem{MR3937870}
Leonid Ryvkin and Tilmann Wurzbacher.
\newblock An invitation to multisymplectic geometry.
\newblock {\em J. Geom. Phys.}, 142:9--36, 2019.

\bibitem{Saemann-Szabo}
Christian S\"{a}mann and Richard~J. Szabo.
\newblock Groupoids, loop spaces and quantization of 2-plectic manifolds.
\newblock {\em Rev. Math. Phys.}, 25(3):1330005, 72, 2013.

\bibitem{MR207332}
J.-M. Souriau.
\newblock Quantification g\'{e}om\'{e}trique.
\newblock {\em Comm. Math. Phys.}, 1:374--398, 1966.

\bibitem{stevphd}
Daniel Stevenson.
\newblock {\em The Geometry of Bundle Gerbes}.
\newblock 2000.
\newblock Thesis (Ph.D.)--University of Adelaide. arXiv:math/0004117.

\bibitem{MR1183739}
Nicholas M.~J. Woodhouse.
\newblock {\em Geometric quantization}.
\newblock Oxford Mathematical Monographs. The Clarendon Press, Oxford
  University Press, New York, second edition, 1992.
\newblock Oxford Science Publications.

\bibitem{MR2982023}
Marco Zambon.
\newblock {$L_\infty$}-algebras and higher analogues of {D}irac structures and
  {C}ourant algebroids.
\newblock {\em J. Symplectic Geom.}, 10(4):563--599, 2012.

\end{thebibliography}
\bibliographystyle{plain}

\end{document}